%% file: main.tex
\documentclass[11pt]{article}
\usepackage[T1]{fontenc}
\usepackage{arxiv}  %
\usepackage{amsmath,amssymb,amsfonts,bm,booktabs}
\usepackage{algorithm}
\usepackage{algpseudocode}
\usepackage{graphicx}
\usepackage[table]{xcolor}
\usepackage[numbers,sort&compress]{natbib}
\usepackage{xspace}
\usepackage{hyperref}
\hypersetup{colorlinks=true,linkcolor=blue,citecolor=blue,urlcolor=blue}

\usepackage{amsthm}
\newtheorem{theorem}{Theorem}
\newtheorem{lemma}{Lemma}
\newtheorem{proposition}{Proposition}
\newtheorem{corollary}{Corollary}
\theoremstyle{definition}
\newtheorem{definition}{Definition}
\newtheorem{assumption}{Assumption}
\theoremstyle{remark}
\newtheorem{remark}{Remark}

\input{text/macros}

\title{A Curvature-Aware Rank-Adaptive Distributed\\
Augmented-Lagrangian Solver for Large-Scale SDPs}

\author{%
  \textbf{Hongpei Li$^{1}$}, Huikang Liu$^2$, Dongdong Ge$^2$, Yinyu Ye$^{2,3}$\\
  $^1$Northwestern University \\ 
  $^2$Shanghai Jiao Tong University \\ 
  $^3$Stanford University \\
  \texttt{HongpeiLi2031@u.northwestern.edu, hkl1u@sjtu.edu.cn}
}

\date{}

\begin{document}
\maketitle

\begin{abstract}
We present \name{} (\nameexpand), a distributed multi-GPU solver for
large-scale semidefinite programs (SDPs) based on a rank-adaptive
Burer--Monteiro factorization and an augmented Lagrangian method.  At fixed
ranks, a matrix-free L-BFGS method with negative-curvature corrections targets
an approximate Euclidean second-order stationary point of the factored
augmented Lagrangian.  A reverse multiplier shift turns a negative dual-slack
direction into exact negative curvature after rank expansion, and a small
joint rank-lift problem selects a batched low-rank correction.  A verified
slack lower bound provides an \mbox{a posteriori} approximate KKT certificate.  Our
analysis establishes generic global-optimality guarantees for heterogeneous
products of PSD cones at per-block ranks near the Barvinok--Pataki scale,
together with a finite-accuracy counterpart under blockwise cost smoothing.

For scalable execution, \name{} distributes constraint rows, factor columns,
and PSD blocks over a
\textsc{Constraint} $\times$ \textsc{Rank} $\times$ \textsc{Cone} device mesh.
The primal residual, gradient, Hessian--vector products, and slack
matrix--vector products are evaluated using device-local operations and
axis-wise collectives.  On the Mittelmann benchmark, \name{} exhibits stronger
robustness than existing low-rank GPU approaches under a uniform accuracy
standard.  Experiments on large-scale SDP relaxations from robotics,
electronic structure, and Max-Cut demonstrate the complementary scaling
regimes of the three distribution axes, with observed wall-clock speedups of
up to $4\times$ on four H100 GPUs.

\end{abstract}

\input{text/intro_new}
\input{text/theory}
\input{text/distributed_new}
\input{text/kernels}
\input{text/experiments}
\input{text/conclusion}
\begingroup
\footnotesize
\setlength{\bibsep}{0pt}
\bibliographystyle{plainnat}
\bibliography{refs}
\endgroup
\appendix
\input{text/appendix}
\end{document}

%% file: text/macros.tex
\newcommand{\name}{\textsc{CARDAL}\xspace}
\newcommand{\nameexpand}{\textbf{C}urvature-\textbf{A}ware \textbf{R}ank-Adaptive
  \textbf{D}istributed \textbf{A}ugmented \textbf{L}agrangian}

\newcommand{\R}{\mathbb{R}}                 
\newcommand{\Sym}[1]{\mathbb{S}^{#1}}       
\newcommand{\inner}[2]{\langle #1,\, #2\rangle}

\newcommand{\Aop}{\mathcal{A}}              
\newcommand{\Aadj}{\mathcal{A}^{*}}         
\newcommand{\Cobj}{C}                       
\newcommand{\bvec}{b}                       
\newcommand{\Xvar}{X}                       
\newcommand{\yvar}{y}                       
\newcommand{\Zslack}{S}                     

\newcommand{\Lrho}{\mathcal{L}_{\rho}}      

\newcommand{\Ffac}{F}                       
\newcommand{\blkdiag}{\operatorname{blkdiag}}



%% file: text/intro_new.tex
\section{Introduction}
\label{sec:introduction}

Semidefinite programming (SDP) is a fundamental modeling framework in convex
optimization \cite{Todd2001,BoydVandenberghe2004}, with applications in combinatorial optimization \cite{han2002improved,burer2001projected}, control and
robotics \cite{kang2024fast,kang2025global}, quantum information \cite{childs2007quantum,low2025fast}, and moment--sum-of-squares relaxations of
polynomial optimization problems \cite{lasserre2001global,lasserre2006convergent}.  We consider the primal--dual pair
\begin{equation}
\begin{aligned}
\textbf{(P)}\qquad
&\min_{\Xvar\in\Sym{n}}\ \inner{\Cobj}{\Xvar}
&&\text{s.t.}\quad
  \Aop\Xvar=\bvec,\quad \Xvar\succeq0, \\
\textbf{(D)}\qquad
&\max_{\yvar\in\R^m}\ \inner{\bvec}{\yvar}
&&\text{s.t.}\quad
  \Zslack(\yvar):=\Cobj-\Aadj\yvar\succeq0,
\end{aligned}
\label{eq:sdp}
\end{equation}
where $\Aop:\Sym{n}\to\R^m$ is defined by
$(\Aop\Xvar)_i=\inner{A_i}{\Xvar}$ and
$\Aadj\yvar=\sum_{i=1}^m \yvar_i A_i$.
For many emerging applications, the difficulty of \eqref{eq:sdp} is determined
jointly by the matrix order, the number of affine constraints, and the number
and heterogeneity of PSD blocks.

Interior-point solvers, such as MOSEK~\cite{aps2019mosek},
COPT~\cite{ge2022cardinal}, SDPT3~\cite{toh2012implementation},
SeDuMi~\cite{sturm1999using}, and Clarabel~\cite{goulart2026clarabel},
remain the preferred choice for high-accuracy solutions at moderate scale.
Their scalability is limited, however, by the cost of solving the Newton
systems: as the number of constraints and cone dimensions grow, fill-in and
factorization costs can dominate both memory consumption and runtime, even
when the original SDP data are sparse.

Large-scale non-interior-point approaches include bundle methods
\cite{liao2026overview,ding2023revisiting}, low-rank nonlinear programming
methods such as SDPLR \cite{BurerMonteiro2003,spdlr}, augmented-Lagrangian
and semismooth Newton methods such as SDPNAL+
\cite{zhao2010newton,yang2015sdpnal+,sun2020sdpnal+}, operator splitting such
as SCS \cite{o2016conic}, and conditional-gradient methods such as CGAL and
SketchyCGAL
\cite{yurtsever2019conditional,yurtsever2018conditional,yurtsever2021scalable}.
They avoid the dense factorizations of a full interior-point step and can
therefore address substantially larger problems, at the price of a different
accuracy--runtime tradeoff.

An important family is based on the augmented Lagrangian method (ALM).
At each ALM iteration, a subproblem for the augmented Lagrangian at penalty
parameter $\rho>0$ is solved, possibly inexactly:
\begin{equation}
\Xvar^{t+1}\ \approx\
\operatorname*{arg\,min}_{\Xvar\succeq0}\ \Lrho(\Xvar,\yvar)
=\inner{\Cobj}{\Xvar}
-\inner{\yvar}{\Aop\Xvar-\bvec}
+\frac{\rho}{2}\bigl\lVert \Aop\Xvar-\bvec\bigr\rVert_{2}^{2}
.
\label{eq:convex-alm-subproblem}
\end{equation}
Although this subproblem is convex and has only a conic constraint, solving it
globally at large scale still requires substantial spectral work to enforce
positive semidefiniteness.  Conditional-gradient methods avoid a full PSD
projection through rank-one updates, but their global rates are sublinear and
the rank of the maintained iterate can grow with the requested accuracy.

Low-rank factorization offers a different scaling regime.  Write
$m_{\mathrm{eff}}:=\operatorname{rank}(\mathcal A)$ for the number of
independent affine constraints.  For a nonempty compact feasible set, the
Barvinok--Pataki bound~\cite{Barvinok1995,Pataki1998} guarantees the existence
of an optimal SDP solution of some rank $k$ satisfying
$k(k+1)/2\leq m_{\mathrm{eff}}$.
The Burer--Monteiro (BM) substitution $X=FF^\top$
\cite{BurerMonteiro2003}, with $F\in\mathbb R^{n\times k}$ and
$k\ll n$, therefore replaces an $O(n^2)$ PSD variable by an
$O(nk)$ factor.  Applied to the convex ALM
subproblem~\eqref{eq:convex-alm-subproblem}, it gives
\begin{equation}
    F^{t+1}
    \ \approx\
    \operatorname*{arg\,min}_{F\in\mathbb R^{n\times k}}
    \Phi_{\rho,y^t}(F):=\Lrho(FF^\top,y^t).
    \label{eq:factored-alm-subproblem-intro}
\end{equation}
This reduction raises two fundamental questions: how to choose the working
rank $k$, and how to solve the resulting problem
\eqref{eq:factored-alm-subproblem-intro} reliably.  Although
\cite[Proposition~2]{ALORA} shows that the ALM subproblem
\eqref{eq:convex-alm-subproblem} admits an optimal solution satisfying the
Barvinok--Pataki rank bound under the dual Slater condition, this regularity
condition need not hold in general.  More importantly,
\eqref{eq:factored-alm-subproblem-intro} remains
nonconvex because of the quartic penalty term and may possess spurious local
minima.  Thus, the existence of a low-rank global solution does not by itself
provide an algorithmic mechanism for finding one.

A standard response is to adapt the rank until the factorized solver provides
an approximate global solution of the current convex ALM subproblem.  For a
fixed multiplier $y$, define
\[
    Z_\rho(F;y)
    :=
    \left.\nabla_X\Lrho(X,y)\right|_{X=FF^\top}.
\]
Since $\nabla_F\Phi_{\rho,y}(F)=2Z_\rho(F;y)F$,
an exact critical point satisfies $Z_\rho(F;y)F=0$.  If, in addition,
$Z_\rho(F;y)\succeq0$, then $FF^\top$ satisfies the KKT conditions of
the convex ALM subproblem and is therefore globally optimal.  If
$Z_\rho(F;y)$ has a negative eigenvalue, a corresponding eigenvector
provides an escape direction after rank lifting.  This leads to the
generic adaptive-rank inner loop
\begin{equation} \begin{aligned} F^j &\in \operatorname{crit} \left\{ F\mapsto\Lrho(FF^\top,y): F\in\mathbb R^{n\times k_j} \right\},\\ Z_\rho(F^j;y)\nsucceq0 &\quad\Longrightarrow\quad F^{j+1}=[\,F^j\ \ \alpha_jv_j\,],
         \quad v_j\approx
        v_{\min}\!\left(Z_\rho(F^j;y)\right). \end{aligned} \label{alm-framework} \end{equation}
The inner loop repeats until an inner global-accuracy contract of the
form
\begin{equation}
    \Lrho(FF^\top,y)
    -
    \inf_{X\succeq0}\Lrho(X,y)
    \leq\varepsilon
    \label{eq:convex-inner-contract}
\end{equation}
is met.  The outer iteration can then invoke standard inexact convex-ALM
theory~\cite{rockafellar1976augmented,lan2016iteration}.  This idea underlies
several prominent low-rank SDP solvers, including
SDPLR~\cite{spdlr}, ManiSDP~\cite{manisdp},
SDPDAL~\cite{wang2023decomposition}, HALLaR~\cite{HALLaR}, and
ALORA~\cite{ALORA}.

The limitation is structural: solving a transient convex ALM subproblem may
require a rank substantially larger than that of the final SDP solution.
For example, HALLaR's analysis permits $O(1/\varepsilon)$ inner correction
rounds in the worst case~\cite[Theorem~2.5]{HALLaR}; since each non-reset
Frank--Wolfe correction may add one column, the factor width can grow with
the required accuracy.  Consequently, both the $O(nk)$ storage and the cost
of the inner solve may become prohibitive.  Other methods, such as
SDPLR~\cite{spdlr} and ALORA~\cite{ALORA}, instead update the multiplier after
an approximate fixed-rank solve and a spectral rank-adaptation step, without
first certifying \eqref{eq:convex-inner-contract}.  This strategy avoids
fitting the rank to every transient convex subproblem, but existing guarantees
are local: they require the multiplier and factor initialization to be
sufficiently close to a strictly complementary primal--dual solution and
therefore do not provide a global mechanism for reaching this regime from an
arbitrary initialization.

These observations lead to the central question of this work:
\begin{center}
\emph{Under what conditions can an ALM--BM method stop rank growth near the Barvinok--Pataki scale while retaining a verifiable optimality guarantee?}
\end{center}

Another obstacle to applying this framework to substantially larger instances
is systems scalability.  Recent general-purpose low-rank GPU solvers,
including cuLoRADS~\cite{LoRADS,cuLoRADS}, cuHALLaR~\cite{cuHALLaR}, and
ALORA~\cite{ALORA}, show that matrix-free low-rank iterations map well to GPU
hardware.  Their implementations primarily target a single device, whereas the
memory footprint and computational demands of the largest graph and moment
relaxations can exceed single-device capacity.  The named-axis methodology of
D-PDLP~\cite{DPDLP}
provides a useful starting point, but a low-rank block-diagonal SDP has a third
structural dimension beyond the row--column layout of a linear program.
Distributing only the affine constraints is insufficient: work and storage
may instead be dominated by the factor columns, a few large PSD blocks, or
thousands of small blocks.

\paragraph{Contributions.} We address the optimization and systems questions
with \name{} (\nameexpand), a rank-adaptive distributed multi-GPU ALM--BM
solver.  Its two main contributions are as follows.

\begin{itemize}
\item \emph{A curvature-aware rank-adaptive ALM--BM framework with
product-cone guarantees.}
At fixed ranks, we target approximate second-order stationarity using
L-BFGS, a matrix-free negative-curvature search, and an exact quartic line
search.  A reverse multiplier shift converts a negative-slack witness
into strict negative curvature after zero-padding.  A closed-form step handles
one direction or a shared batch, while a joint rank-lift problem couples
multiple negative-slack directions across PSD blocks.  We formulate the
geometry, stationarity transfer, and convergence analysis directly for
heterogeneous products of PSD cones.  Under the stated fixed-rank stationarity
and regularity assumptions, accumulation points are second-order critical.
For almost every tuple of block costs, their represented matrices are globally
optimal once $\tau(k_c)=k_c(k_c+1)/2$ exceeds the affine dimension visible to
block $c$.
The resulting exact staircase has finite blockwise rank growth.  At finite
accuracy, slack lower bounds yield deterministic blockwise approximate KKT
certificates.  We further extend the cost-smoothed AFAC argument
of~\cite{cifuentes2022polynomial} to independently perturbed cost blocks and
transfer the result back to the nominal costs.

\item \emph{A composable three-axis multi-GPU decomposition.}
We distribute constraint rows, factor columns, and PSD blocks over a
$\textsc{Constraint}\times\textsc{Rank}\times\textsc{Cone}$ device mesh and
derive exact distributed formulas for residuals, gradients,
Hessian--vector products, and slack matrix--vector products.  These primitives
support distributed spectral searches, heterogeneous block ranks, cost-aware
cone assignment, and batching of small blocks.  The three axes address
complementary regimes and can be combined for mixed-structure instances.  The distributed
operator evaluations coincide with the product-cone operators used in the
convergence, generic-landscape, smoothed-analysis, and \mbox{a posteriori}
certification results.
\end{itemize}


\paragraph{Organization.}
Section~2 develops the product-cone BM geometry, AL-to-BM stationarity
transfer, blockwise landscape, and finite-accuracy theory.  Section~3
presents the rank-adaptive algorithm, fixed-rank convergence, exact
rank-growth analysis, and finite-output guarantees.
Section~4 gives the multi-GPU decomposition and distributed operator
identities.  Section~5 collects block operators, scaling, and implementation
safeguards.  Section~6
reports the numerical experiments, and Section~7 concludes.

%% file: text/theory.tex
\section{Theoretical Foundations for Product Cones}
\label{sec:theoretical-foundations}

CARDAL is designed for semidefinite programs with heterogeneous PSD blocks.
We therefore formulate the theory directly on a product cone and treat a
single PSD cone as the special case $q=1$.  This section answers a pointwise
question: which stationarity and curvature conditions at a factor imply exact
or approximate optimality of the original SDP?  Section~\ref{sec:algorithm-convergence}
then gives conditions under which CARDAL produces such factors.

\subsection{Product-cone BM formulation and geometry}
\label{subsec:bm-landscape}

For a positive integer $q$, write $[q]:=\{1,\ldots,q\}$.
Consider the primal--dual pair
\begin{equation}
\begin{aligned}
  \min_{\{X_c\succeq0\}_{c=1}^q}\quad
  &\sum_{c=1}^q\langle C_c,X_c\rangle,
  &\qquad \qquad  \qquad \qquad
  \max_{y\in\mathbb R^m}\quad
  &b^\top y,\\
  \text{s.t.}\quad
  &\sum_{c=1}^q\mathcal A_c(X_c)=b,
  &
  \text{s.t.}\quad
  &S_c(y):=C_c-\mathcal A_c^*(y)\succeq0
    \quad \forall c\in[q],
\end{aligned}
\label{eq:product-sdp-theory}
\end{equation}
where $\mathcal A_c:\mathbb S^{n_c}\to\mathbb R^m$ and redundant affine
rows are allowed.  We write
\[
  \mathcal A_\oplus(H_1,\ldots,H_q)
  :=\sum_{c=1}^q\mathcal A_c(H_c),
  \qquad
  m_{\mathrm{eff}}:=\operatorname{rank}(\mathcal A_\oplus).
\]
The formulation includes a conventional single-matrix SDP when $q=1$ and a
block-diagonal encoding in which the $X_c$ are the diagonal blocks.

For a rank profile
$\mathbf k=(k_1,\ldots,k_q)$, apply the Burer--Monteiro substitution
\[
  X_c=F_cF_c^\top,
  \qquad
  F_c\in\mathbb R^{n_c\times k_c},
  \qquad
  F=(F_1,\ldots,F_q).
\]
Product factor spaces use the direct-sum inner product and norm,
\[
  \langle F,U\rangle_\oplus
  :=\sum_{c=1}^q\langle F_c,U_c\rangle,
  \qquad
  \|F\|_\oplus^2:=\sum_{c=1}^q\|F_c\|_F^2.
\]
The corresponding fixed-rank BM problem is
\begin{equation*}
  \min_F\
  g(F):=\sum_{c=1}^q\langle C_c,F_cF_c^\top\rangle
  \quad\text{s.t.}\quad
  c(F):=\sum_{c=1}^q\mathcal A_c(F_cF_c^\top)-b=0,
  \tag{$\mathrm{BM}_{\mathbf k}$}
  \label{eq:bm-fixed-rank}
\end{equation*}
with feasible set
\begin{equation}
  \mathcal M_{\mathbf k}:=\{F:c(F)=0\}.
  \label{eq:product-slack-manifold}
\end{equation}
For $U=(U_c)_c$,
\begin{equation}
  Dc(F)[U]
  =\sum_{c=1}^q
    \mathcal A_c(F_cU_c^\top+U_cF_c^\top).
  \label{eq:product-jacobian}
\end{equation}
Define
\begin{equation}
  \mathcal B_F\nu
  :=\bigl(\mathcal A_c^*(\nu)F_c\bigr)_{c=1}^q,
  \qquad
  G(F):=\mathcal B_F^*\mathcal B_F.
  \label{eq:BF-and-Gram}
\end{equation}
Then $Dc(F)=2\mathcal B_F^*$.

\begin{assumption}[Product-cone BM smoothness]
\label{ass:bm-geometry}
For every rank profile $\mathbf k$ under consideration with
$\mathcal M_{\mathbf k}\neq\varnothing$, there is an open neighborhood
$\mathcal U_{\mathbf k}\supset\mathcal M_{\mathbf k}$ on which $Dc(F)$ has
constant rank.
\end{assumption}

For $q=1$, Assumption~\ref{ass:bm-geometry} is the constant-rank alternative
of the standard BM smoothness assumption
\cite[Assumption~1.1(b)]{BVB2020}.  The constant-rank theorem makes
$\mathcal M_{\mathbf k}$ an embedded submanifold, with
\begin{equation}
  T_F\mathcal M_{\mathbf k}=\ker Dc(F),
  \qquad
  N_F\mathcal M_{\mathbf k}=\operatorname{range}\mathcal B_F.
  \label{eq:tangent-normal-spaces}
\end{equation}
Moreover, $G(F)^\dagger$ and the orthogonal tangent projector
\begin{equation}
  \mathcal P_F(Z)
  :=Z-\mathcal B_FG(F)^\dagger\mathcal B_F^*Z
  =\operatorname{Proj}_{\ker Dc(F)}(Z)
  \label{eq:tangent-projector}
\end{equation}
vary smoothly on $\mathcal U_{\mathbf k}$.  Linear independence of the
factor-space constraint gradients is not required; if $Dc(F)$ has full row
rank, the compatible multiplier introduced next is unique.

For a rank staircase, nonemptiness is required at its initial profile.
Zero-padding preserves feasibility at every later profile, but smoothness
must hold at each visited profile; neither compactness nor boundedness of the
algorithmic iterates is included in Assumption~\ref{ass:bm-geometry}.

Let $C_F:=(C_cF_c)_{c=1}^q$ and define the canonical multiplier and block
slacks
\begin{equation}
  \mu_B(F):=G(F)^\dagger\mathcal B_F^*C_F,
  \qquad
  S_{B,c}(F):=C_c-\mathcal A_c^*(\mu_B(F)).
  \label{eq:canonical-multiplier}
\end{equation}
The pseudoinverse fixes a canonical representative when compatible
multipliers are not unique.  At a feasible factor, the Riemannian gradient
and Hessian quadratic form are
\begin{align}
  \operatorname{grad}g(F)
  &=\bigl(2S_{B,c}(F)F_c\bigr)_{c=1}^q,
  \label{eq:bm-rgrad}\\
  \langle U,\operatorname{Hess}g(F)[U]\rangle_\oplus
  &=2\sum_{c=1}^q\langle U_c,S_{B,c}(F)U_c\rangle,
  \qquad U\in T_F\mathcal M_{\mathbf k}.
  \label{eq:bm-rhess}
\end{align}

\begin{definition}[Product-cone BM second-order criticality]
\label{def:bm-sosp}
A factor $F\in\mathcal M_{\mathbf k}$ is a second-order critical point of
\eqref{eq:bm-fixed-rank} if
\begin{equation}
  S_{B,c}(F)F_c=0\quad(c\in[q]),
  \qquad
  \sum_{c=1}^q\langle U_c,S_{B,c}(F)U_c\rangle\geq0
  \quad\text{for every }U\in\ker Dc(F).
  \label{eq:product-bm-sosp}
\end{equation}
\end{definition}

For $y\in\mathbb R^m$ and $\rho>0$, define the product factored augmented
Lagrangian
\begin{equation}
  \Phi_{\rho,y}(F)
  :=g(F)-\langle y,c(F)\rangle+\frac{\rho}{2}\|c(F)\|_2^2,
  \qquad
  \min_F\ \Phi_{\rho,y}(F).
  \label{eq:factored-AL-subproblem}
\end{equation}
Although the augmented Lagrangian is quadratic in the matrix tuple
$(X_c)_c$, it is quartic in $F$.  Introduce the shifted multiplier
\begin{equation}
  \widehat y(F;y,\rho):=y-\rho c(F).
  \label{eq:shifted-multiplier}
\end{equation}
Direct differentiation in the product factor space gives
\begin{align}
  \nabla\Phi_{\rho,y}(F)
  &=\bigl(2S_c(\widehat y)F_c\bigr)_{c=1}^q,
  \label{eq:AL-gradient}\\
  \bigl[\nabla^2\Phi_{\rho,y}(F)[U]\bigr]_c
  &=2S_c(\widehat y)U_c
    +2\rho\mathcal A_c^*\!\left(Dc(F)[U]\right)F_c,
  \label{eq:AL-HVP}\\
  \langle U,\nabla^2\Phi_{\rho,y}(F)[U]\rangle_\oplus
  &=2\sum_{c=1}^q\langle U_c,S_c(\widehat y)U_c\rangle
    +\rho\|Dc(F)[U]\|_2^2.
  \label{eq:AL-Hessian-form}
\end{align}

\begin{proposition}[Product-cone SOSP transfer]
\label{prop:product-sosp-transfer}
Suppose Assumption~\ref{ass:bm-geometry} holds at profile $\mathbf k$.
For any $y\in\mathbb R^m$ and $\rho>0$, every feasible full-space
second-order stationary point of $\Phi_{\rho,y}$ satisfies
\eqref{eq:product-bm-sosp}.
\end{proposition}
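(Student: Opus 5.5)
Let $F$ be feasible, so $c(F)=0$, and suppose $\nabla\Phi_{\rho,y}(F)=0$ and $\nabla^2\Phi_{\rho,y}(F)\succeq0$ on the full product factor space. Feasibility collapses the shifted multiplier \eqref{eq:shifted-multiplier} to $\widehat y=y$. The plan is to show that $y$ itself is a compatible multiplier, then transfer both conditions from $S_c(y)$ to the canonical slacks $S_{B,c}(F)$.

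\emph{First-order part.} By \eqref{eq:AL-gradient}, stationarity gives $S_c(y)F_c=0$ for every $c$, that is, $C_cF_c=\mathcal A_c^*(y)F_c$. In the notation of \eqref{eq:BF-and-Gram} this reads $C_F=\mathcal B_Fy$, so $C_F\in\operatorname{range}\mathcal B_F$. The canonical multiplier \eqref{eq:canonical-multiplier} satisfies $\mathcal B_F\mu_B(F)=\mathcal B_FG(F)^\dagger\mathcal B_F^*C_F$. This is the orthogonal projection of $C_F$ onto $\operatorname{range}\mathcal B_F$, using the standard identity $\mathcal B(\mathcal B^*\mathcal B)^\dagger\mathcal B^*=\operatorname{Proj}_{\operatorname{range}\mathcal B}$. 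Since $C_F$ already lies in that range, $\mathcal B_F\mu_B(F)=C_F=\mathcal B_Fy$, so $\mathcal B_F(y-\mu_B(F))=0$. Componentwise, $\mathcal A_c^*(y-\mu_B(F))F_c=0$, and therefore $S_{B,c}(F)F_c=S_c(y)F_c=0$ for all $c$.

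\emph{Second-order part.} Take $U\in\ker Dc(F)$. In \eqref{eq:AL-Hessian-form} the penalty term $\rho\|Dc(F)[U]\|^2$ vanishes, so $\sum_c\langle U_c,S_c(y)U_c\rangle\ge0$. It remains to replace $S_c(y)$ by $S_{B,c}(F)$. Set $\delta:=y-\mu_B(F)$; then
\[
\sum_c\langle U_c,S_{B,c}(F)U_c\rangle-\sum_c\langle U_c,S_c(y)U_c\rangle=\sum_c\langle U_c,\mathcal A_c^*(\delta)U_c\rangle.
\]
This difference need not vanish, because $\delta\in\ker\mathcal B_F$ only says $\mathcal A_c^*(\delta)F_c=0$ and gives no control on $U_c$. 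I therefore expect the second-order transfer to be the main obstacle.

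My first attempt is to show that the Riemannian Hessian quadratic form \eqref{eq:bm-rhess} is independent of which compatible multiplier is used. The standard argument writes the Hessian along a tangent curve $\gamma(t)\subset\mathcal M_{\mathbf k}$ with $\gamma(0)=F$ and $\gamma'(0)=U$. For any compatible $\lambda$, $g(\gamma(t))=g(\gamma(t))-\langle\lambda,c(\gamma(t))\rangle$ because $c(\gamma(t))\equiv0$. Differentiating twice yields $2\sum_c\langle U_c,S_c(\lambda)U_c\rangle$ plus a term $\langle\nabla g-Dc^*\lambda,\gamma''(0)\rangle$, and that term vanishes exactly when $\lambda$ is compatible, i.e.\ $\mathcal B_F\lambda=C_F$. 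Such curves exist for every tangent $U$ by Assumption~\ref{ass:bm-geometry}, since $\mathcal M_{\mathbf k}$ is an embedded submanifold with tangent space $\ker Dc(F)$ by \eqref{eq:tangent-normal-spaces}. The second derivative of $g\circ\gamma$ does not depend on $\lambda$, so both $y$ and $\mu_B(F)$ produce the same quadratic form on $\ker Dc(F)$. Hence $\sum_c\langle U_c,\mathcal A_c^*(\delta)U_c\rangle=0$ there, and combining with the previous display gives \eqref{eq:product-bm-sosp}.
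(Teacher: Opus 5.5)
Your proof is correct and follows essentially the same route as the paper. You first show $y-\mu_B(F)\in\ker\mathcal B_F$ from compatibility: the paper writes this as $\mu_B=G(F)^\dagger G(F)y$, while you use that $\mathcal B_FG(F)^\dagger\mathcal B_F^*$ is the projector onto $\operatorname{range}\mathcal B_F$. Your comparison of $(g\circ\gamma)''(0)$ computed with $\lambda=y$ and with $\lambda=\mu_B(F)$ is then exactly the paper's step of differentiating $\langle\delta,c(\Gamma(t))\rangle\equiv0$ twice, only split into two identities.
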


For $q=1$, Proposition~\ref{prop:product-sosp-transfer} is exactly the
single-cone AL-to-BM transfer.  The implication is one-way: a manifold SOSP
controls curvature only on $\ker Dc(F)$, whereas the augmented Hessian acts
on the full factor space.  When the constraint gradients are dependent, a
compatible ALM multiplier need not equal $\mu_B(F)$ as a vector, but the two
slacks have the same action on $F$ and the same quadratic form on
$T_F\mathcal M_{\mathbf k}$.  This representation-invariant compatibility is
the key step in the proof.

The positive penalty term in \eqref{eq:AL-Hessian-form} also shows why exact
feasibility matters.  It vanishes on tangent directions at a feasible factor,
but can mask negative curvature at an infeasible iterate.  Finite-accuracy
statements therefore retain the feasibility and curvature residuals
explicitly.

\subsection{Generic landscape and blockwise rank conditions}
\label{subsec:product-landscape}

Let
\begin{equation}
  r_c:=\operatorname{rank}(\mathcal A_c)
      =\dim\operatorname{Im}(\mathcal A_c^*),
  \qquad
  \tau(k):=\frac{k(k+1)}2.
  \label{eq:block-affine-dimension}
\end{equation}
For one cone, the classical BM landscape result states that every
second-order critical point is globally optimal for every cost when
$k\geq n$, and for almost every cost when $k<n$ and
\begin{equation}
  \tau(k)>\operatorname{rank}(\mathcal A)
  \label{eq:bvb-rank-threshold}
\end{equation}
under the standard smoothness assumption
\cite[Proposition~3.1, Corollary~3.2, and Lemma~3.3]{BVB2020}.
Theorem~\ref{thm:product-generic-landscape} extends the single-cone conclusions
in~\cite[Theorem~1.4]{BVB2020} to heterogeneous products of PSD cones with a
separate rank condition for each block.

\begin{theorem}[Generic product-cone landscape]
\label{thm:product-generic-landscape}
Suppose Assumption~\ref{ass:bm-geometry} holds at profile $\mathbf k$ and,
for every $c\in[q]$,
\begin{equation}
  k_c\geq n_c
  \qquad\text{or}\qquad
  \tau(k_c)>r_c.
  \label{eq:product-generic-rank-condition}
\end{equation}
Then, for almost every
$C=(C_1,\ldots,C_q)\in\prod_c\mathbb S^{n_c}$, every product-cone BM
second-order critical point is globally optimal for
\eqref{eq:product-sdp-theory}.  If $k_c\geq n_c$ for every block, the
conclusion holds for every product cost.
\end{theorem}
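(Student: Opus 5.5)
Our plan is to follow the three-step architecture of \cite{BVB2020}, carried out blockwise: a deterministic certificate, a rank-deficiency argument, and a genericity argument.

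\emph{Step 1: certificate.} Let $F\in\mathcal M_{\mathbf k}$ be second-order critical. We first show that $S_{B,c}(F)\succeq0$ for every $c$ implies global optimality. The dual vector $\mu_B(F)$ is then feasible for \eqref{eq:product-sdp-theory}. Complementarity $\langle S_{B,c}(F),F_cF_c^\top\rangle=0$ follows from $S_{B,c}(F)F_c=0$. Hence weak duality gives
\[
\sum_c\langle C_c,F_cF_c^\top\rangle=b^\top\mu_B(F),
\]
with a zero duality gap, so $F$ is optimal.

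\emph{Step 2: blocks with a rank-deficient factor.} Fix a block $c$ with $\operatorname{rank}F_c<k_c$. Choose a unit vector $z\in\mathbb R^{k_c}$ with $F_cz=0$. For any $u\in\mathbb R^{n_c}$, set $U_c=uz^\top$ and $U_{c'}=0$ for $c'\neq c$. Then
\[
F_cU_c^\top+U_cF_c^\top=F_czu^\top+uz^\top F_c^\top=0,
\]
so $U\in\ker Dc(F)$ by \eqref{eq:product-jacobian}. The second-order condition \eqref{eq:product-bm-sosp} then gives $\langle uz^\top,S_{B,c}uz^\top\rangle=u^\top S_{B,c}u\ge0$, hence $S_{B,c}(F)\succeq0$. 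Because the tangent space of a product of blocks contains these block-supported directions, the per-block argument decouples. If $k_c\ge n_c$, we instead take $z$ with $F_cz=0$ whenever $\operatorname{rank}F_c<k_c$. In the remaining case $k_c=n_c$ and $F_c$ is invertible, and then $S_{B,c}F_c=0$ forces $S_{B,c}=0\succeq0$. So every block with $k_c\ge n_c$ is certified for every cost. This already proves the last sentence of the theorem via Step 1.

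\emph{Step 3: genericity for blocks with $k_c<n_c$ and $\tau(k_c)>r_c$.} It remains to show that, for almost every $C$, every critical $F$ has $\operatorname{rank}F_c<k_c$ on these blocks. Suppose instead that $\operatorname{rank}F_c=k_c$. First-order criticality gives $(C_c-\mathcal A_c^*\mu)F_c=0$ with $\mu=\mu_B(F)$. Hence $C_c\in\mathcal A_c^*(\mathbb R^m)+\{S\in\mathbb S^{n_c}:SF_c=0\}$. The first summand has dimension $r_c$. The second is the set of symmetric matrices vanishing on a $k_c$-dimensional subspace, of dimension $\tau(n_c-k_c)$, parametrized over the Grassmannian of dimension $k_c(n_c-k_c)$. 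Thus the bad set of $C_c$ is the image of a smooth map from a manifold of dimension at most
\[
r_c+k_c(n_c-k_c)+\tau(n_c-k_c)=r_c+\tau(n_c)-\tau(k_c)<\dim\mathbb S^{n_c}.
\]
By Sard's theorem this image has measure zero, exactly as in \cite[Lemma~3.3]{BVB2020}. A crucial point is that this bad set depends only on $C_c$ and $\mathcal A_c$, not on the other blocks. The multiplier $\mu$ ranges over all of $\mathbb R^m$, but only $\mathcal A_c^*\mu$, of dimension $r_c$, enters. Hence the union of bad sets over such blocks, lifted to the product cost space, is null by Fubini.

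The main obstacle is Step 3: the parametrization must avoid the cross-block coupling through the shared $\mu$. We handle this by quantifying over all of $\operatorname{Im}\mathcal A_c^*$ rather than over compatible multipliers. We must also check that Assumption~\ref{ass:bm-geometry} is used only to make \eqref{eq:bm-rgrad}--\eqref{eq:bm-rhess} valid. Combining Steps 1--3 then completes the proof.
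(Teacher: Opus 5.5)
Your proposal is correct and follows essentially the same route as the paper's proof. Block-supported directions $U_c=u z^\top$ with $z\in\ker F_c$ give $S_{B,c}(F)\succeq0$ on rank-deficient blocks, and the invertible case $k_c=n_c$ gives $S_{B,c}(F)=0$. Full-column-rank blocks with $k_c<n_c$ are excluded generically because $C_c$ would then lie in $\{\operatorname{rank}\le n_c-k_c\}+\operatorname{Im}\mathcal A_c^*$, which has dimension at most $\tau(n_c)-\tau(k_c)+r_c<\tau(n_c)$; the paper lifts this to a null cylinder in the product cost space exactly as you do. Your Grassmannian-bundle parametrization with Sard's theorem is simply an explicit way to compute the codimension of the determinantal variety that the paper cites.
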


Condition~\eqref{eq:product-generic-rank-condition} uses the dimension of the
affine directions visible to each block.  The coarser choice
$\tau(k_c)>m_{\mathrm{eff}}$ is always sufficient, whereas a condition only
on $\sum_c\tau(k_c)$ is not: columns assigned to one block cannot create
null directions with which to test the slack of another block.
Theorem~\ref{thm:product-generic-landscape} reduces to the classical
low-rank branch
\eqref{eq:bvb-rank-threshold} when $q=1$.

\subsection{Finite-accuracy certificates and cost smoothing}
\label{subsec:finite-accuracy}

At finite accuracy, the feasibility, stationarity, and curvature residuals may
be nonzero.  We use the route
\[
  \text{approximate product AL SOSP}
  \ \Longrightarrow\
  \text{product AFAC pair}
  \ \Longrightarrow\
  \text{approximate product-SDP optimality}.
\]
The first implication is deterministic.  The second is a blockwise extension
of the cost-smoothed single-cone AFAC guarantee
in~\cite[Theorem~6]{cifuentes2022polynomial}, formalized for product cones in
Theorem~\ref{thm:product-smoothed-low-rank}.

\begin{definition}[Approximate AL second-order stationarity]
\label{def:approx-sosp}
A product factor $F$ is an $(\eta,\zeta)$-approximate second-order
stationary point of \eqref{eq:factored-AL-subproblem} if
\begin{equation}
  \|\nabla\Phi_{\rho,y}(F)\|_\oplus\leq\eta,
  \qquad
  \lambda_{\min}^{\mathrm{Euc}}
  \!\left(\nabla^2\Phi_{\rho,y}(F)\right)\geq-\zeta.
  \label{eq:approx-AL-SOSP}
\end{equation}
The minimum eigenvalue is taken over the full direct-sum factor space.
\end{definition}

\begin{definition}[Approximate product-SDP optimality]
\label{def:approx-sdp-optimality}
A pair $((X_c)_c,\lambda)$ is
$(\varepsilon_0,\varepsilon_1,\varepsilon_2)$-approximately optimal for
\eqref{eq:product-sdp-theory} if
\begin{equation}
\begin{gathered}
  X_c\succeq0\quad(c\in[q]),\qquad
  \left\|\sum_c\mathcal A_c(X_c)-b\right\|_2\leq\varepsilon_0,\\
  \left(\sum_c\|S_c(\lambda)X_c\|_F^2\right)^{1/2}
    \leq\varepsilon_1,\qquad
  S_c(\lambda)\succeq-\varepsilon_2I_{n_c}\quad(c\in[q]).
\end{gathered}
\label{eq:approx-sdp-optimality}
\end{equation}
\end{definition}

\begin{proposition}[Objective interpretation of approximate KKT conditions]
\label{prop:approx-kkt-gap}
Let $((X_c)_c,\lambda)$ satisfy
\eqref{eq:approx-sdp-optimality}, and let $(X_c^\star)_c$ be any feasible
tuple.  Then
\begin{equation}
\begin{aligned}
  \sum_c\langle C_c,X_c\rangle
  -\sum_c\langle C_c,X_c^\star\rangle
  \leq{}&
  \varepsilon_0\|\lambda\|_2
  +\sqrt{\sum_c n_c}\,\varepsilon_1 +\varepsilon_2\sum_c\operatorname{tr}(X_c^\star).
\end{aligned}
\label{eq:approx-kkt-objective-gap}
\end{equation}
Thus bounded multipliers and a bounded comparison solution turn the
approximate KKT statements below into explicit one-sided objective bounds.
\end{proposition}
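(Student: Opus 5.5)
The plan is to expand the objective gap through the dual slacks, exactly as in the exact weak-duality argument, and then bound each of the three resulting terms by one of the residuals $\varepsilon_0,\varepsilon_1,\varepsilon_2$. Since $C_c=S_c(\lambda)+\mathcal A_c^*(\lambda)$ for every block, I would write
\[
  \sum_c\langle C_c,X_c-X_c^\star\rangle
  =\Bigl\langle \lambda,\sum_c\mathcal A_c(X_c)-\sum_c\mathcal A_c(X_c^\star)\Bigr\rangle
  +\sum_c\langle S_c(\lambda),X_c\rangle
  -\sum_c\langle S_c(\lambda),X_c^\star\rangle .
\]
Feasibility of $(X_c^\star)_c$ gives $\sum_c\mathcal A_c(X_c^\star)=b$. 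The first term is therefore $\langle\lambda,\sum_c\mathcal A_c(X_c)-b\rangle$, and Cauchy--Schwarz bounds it by $\varepsilon_0\|\lambda\|_2$.

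For the second term, I would use $\langle S_c,X_c\rangle=\operatorname{tr}(S_cX_c)$ together with $|\operatorname{tr}(M)|\le\sqrt{n_c}\,\|M\|_F$ for $M\in\mathbb R^{n_c\times n_c}$. Cauchy--Schwarz then gives $\operatorname{tr}(M)=\langle I,M\rangle\le\|I\|_F\|M\|_F$. Summing over blocks and applying Cauchy--Schwarz again in $\mathbb R^q$ yields
\[
  \sum_c\sqrt{n_c}\,\|S_c(\lambda)X_c\|_F
  \le\Bigl(\sum_c n_c\Bigr)^{1/2}\Bigl(\sum_c\|S_c(\lambda)X_c\|_F^2\Bigr)^{1/2}
  \le\sqrt{\textstyle\sum_c n_c}\,\varepsilon_1 .
\]
This step uses neither $X_c\succeq0$ nor the sign of $S_c$. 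The approximate complementarity residual is controlled directly.

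For the third term, $X_c^\star\succeq0$ and $S_c(\lambda)+\varepsilon_2 I\succeq0$ give $\langle S_c(\lambda)+\varepsilon_2I,X_c^\star\rangle\ge0$. Hence $-\langle S_c(\lambda),X_c^\star\rangle\le\varepsilon_2\operatorname{tr}(X_c^\star)$. Adding the three bounds gives \eqref{eq:approx-kkt-objective-gap}.

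None of these steps is a real obstacle. The only care needed is in the complementarity term: use the trace/Frobenius inequality with the $\sqrt{n_c}$ factor rather than some bound requiring symmetry of $S_cX_c$, and combine the blocks by Cauchy--Schwarz so the constant comes out as $\sqrt{\sum_c n_c}$.
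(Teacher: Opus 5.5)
Your proposal is correct and follows essentially the same argument as the paper. It decomposes $C_c=\mathcal A_c^*(\lambda)+S_c(\lambda)$, bounds the residual term by Cauchy--Schwarz, bounds the complementarity term by the blockwise trace--Frobenius inequality combined across blocks, and bounds the comparison term using $S_c(\lambda)\succeq-\varepsilon_2 I$ with $X_c^\star\succeq0$; the only cosmetic difference is that the paper passes through the intermediate dual value $b^\top\lambda$ instead of working with the objective difference directly.
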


Whenever $\mathcal A_c$ is applied to a nonsymmetric matrix, it denotes the
extension induced by the symmetric matrices defining that operator.  Hence
\[
  Dc(F)[U]
  =2\sum_c\mathcal A_c(U_cF_c^\top).
\]

\begin{definition}[Product approximate feasible approximate criticality]
\label{def:afac}
For $\epsilon=(\epsilon_0,\epsilon_1,\epsilon_2)$ and $\gamma>0$, a pair
$(F,\lambda)$ is a product $(\epsilon,\gamma)$-AFAC pair if
\begin{align}
  \|c(F)\|_2&\leq\epsilon_0,
   \quad \left(\sum_c\|S_c(\lambda)F_c\|_F^2\right)^{1/2}
  \leq\epsilon_1,
  \label{eq:product-afac-first}\\
  \sum_c\langle U_c,S_c(\lambda)U_c\rangle
  &\geq-\epsilon_2
  \quad\text{whenever}\quad
  \|U\|_\oplus=1,\quad
  \left\|\sum_c\mathcal A_c(U_cF_c^\top)\right\|_2\leq\gamma.
  \label{eq:product-afac-second}
\end{align}
\end{definition}

\begin{proposition}[AL residuals imply product AFAC]
\label{prop:alm-to-afac}
Suppose $F$ is an $(\eta,\zeta)$-approximate product AL SOSP and set
$\lambda=\widehat y(F;y,\rho)$.  Then, for every $\gamma>0$,
$(F,\lambda)$ is a product $(\epsilon,\gamma)$-AFAC pair with
\begin{equation}
  \epsilon_0=\|c(F)\|_2,
  \qquad
  \epsilon_1=\frac{\eta}{2},
  \qquad
  \epsilon_2=\frac{\zeta}{2}+2\rho\gamma^2.
  \label{eq:AL-to-AFAC-tolerances}
\end{equation}
\end{proposition}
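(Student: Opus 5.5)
The plan is to verify the three AFAC conditions in Definition~\ref{def:afac} in turn, using the explicit gradient \eqref{eq:AL-gradient} and Hessian quadratic form \eqref{eq:AL-Hessian-form} of $\Phi_{\rho,y}$ at $F$, with $\lambda=\widehat y(F;y,\rho)$. With this choice, $S_c(\lambda)=S_c(\widehat y)$ is precisely the slack appearing in both formulas. No assumption on $\gamma$ is needed beyond positivity, and no geometry (Assumption~\ref{ass:bm-geometry}) is used: the argument is entirely in the full Euclidean factor space.

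\emph{Feasibility and first-order conditions.} The feasibility bound $\|c(F)\|_2\le\epsilon_0$ holds with equality by the choice $\epsilon_0=\|c(F)\|_2$. For the stationarity residual, \eqref{eq:AL-gradient} gives $\nabla\Phi_{\rho,y}(F)=(2S_c(\lambda)F_c)_c$. Hence
\[
\Bigl(\sum_c\|S_c(\lambda)F_c\|_F^2\Bigr)^{1/2}=\tfrac12\|\nabla\Phi_{\rho,y}(F)\|_\oplus\le\eta/2 .
\]

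\emph{Second-order condition.} Fix $U$ with $\|U\|_\oplus=1$ and $\|\sum_c\mathcal A_c(U_cF_c^\top)\|_2\le\gamma$. By the Euclidean eigenvalue bound in \eqref{eq:approx-AL-SOSP}, we have $\langle U,\nabla^2\Phi_{\rho,y}(F)[U]\rangle_\oplus\ge-\zeta$. Substituting \eqref{eq:AL-Hessian-form} gives
\[
2\sum_c\langle U_c,S_c(\lambda)U_c\rangle\ge-\zeta-\rho\|Dc(F)[U]\|_2^2 .
\]
Using the stated identity $Dc(F)[U]=2\sum_c\mathcal A_c(U_cF_c^\top)$, the penalty term is at most $\rho\cdot4\gamma^2$. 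Dividing by $2$ yields
\[
\sum_c\langle U_c,S_c(\lambda)U_c\rangle\ge-\zeta/2-2\rho\gamma^2,
\]
which is \eqref{eq:AL-to-AFAC-tolerances}.

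\emph{Main point of care.} The only step needing attention is the Hessian form \eqref{eq:AL-Hessian-form} for a nonsymmetric extension. I would briefly confirm it by differentiating $\Phi$ twice along $F+tU$. The $g$-part and the linear multiplier part contribute $2\sum_c\langle U_c,(C_c-\mathcal A_c^*(y))U_c\rangle$. The penalty part $\frac{\rho}{2}\|c(F+tU)\|^2$ contributes $\rho\|Dc(F)[U]\|^2$ plus $2\rho\langle c(F),\sum_c\mathcal A_c(U_cU_c^\top)\rangle$. The latter term combines with the multiplier term to form $S_c(\widehat y)$, matching the formula. The factor $4$ from $Dc(F)[U]=2\sum_c\mathcal A_c(U_cF_c^\top)$ is where the constant $2\rho\gamma^2$ comes from, so I will track it explicitly.
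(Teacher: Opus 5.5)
Your proposal is correct and follows the paper's proof essentially line by line. The gradient formula \eqref{eq:AL-gradient} gives $\epsilon_1=\eta/2$, and the Euclidean eigenvalue bound combined with \eqref{eq:AL-Hessian-form} and $\|Dc(F)[U]\|_2\le 2\gamma$ gives $\epsilon_2=\zeta/2+2\rho\gamma^2$; your extra check that the term $2\rho\langle c(F),\sum_c\mathcal A_c(U_cU_c^\top)\rangle$ merges with the multiplier term to produce $S_c(\widehat y)$ is also correct, though the paper simply takes \eqref{eq:AL-Hessian-form} as given.
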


For the smoothed statements, fix a linearly independent representation of
the shared affine constraints.  All residuals, multiplier bounds, operator
norms, and the dimensions $r_c$ below refer to this representation.  Removing
affine redundancies leaves the primal feasible set unchanged but changes
multiplier coordinates.  Let
$\|\mathcal A_c\|_{F\to2}$ denote the operator norm from the matrix
Frobenius norm to the Euclidean norm.

\begin{theorem}[Cost-smoothed product-cone finite-accuracy guarantee]
\label{thm:product-smoothed-low-rank}
Fix $\epsilon\in\mathbb R_+^3$, $\gamma,R_F,R_\lambda>0$, and a rank
profile $\mathbf k$ before sampling the block costs.  Independently draw
\[
  C_c\sim\operatorname{Unif}
  \{D\in\mathbb S^{n_c}:\|D-\bar C_c\|_F\leq\sigma_c\}.
\]
Let
\[
  \mathcal J:=\{c:k_c\leq n_c,\quad
                      a_c:=\|\mathcal A_c\|_{F\to2}>0\},
  \qquad
  \delta_c:=\frac{\epsilon_1a_c}{\gamma},
  \qquad
  \kappa_c:=R_\lambda a_c.
\]
Suppose, for every $c\in\mathcal J$,
\begin{equation}
  \tau(k_c)>r_c,
  \qquad
  0<\delta_c\leq\kappa_c,
  \qquad
  \delta_c<\frac{\sigma_c}{4n_c^3}.
  \label{eq:product-smoothed-basic-conditions}
\end{equation}
Then, with probability at least $1-\sum_{c\in\mathcal J}p_c$, where
\begin{equation}
  p_c
  :=4\mathrm e
    \left(\frac{3\kappa_c}{\delta_c}\right)^{r_c}
    \left(\frac{4n_c^3\delta_c}{\sigma_c}\right)^{\tau(k_c)},
  \label{eq:product-smoothed-failure}
\end{equation}
every product $(\epsilon,\gamma)$-AFAC pair satisfying
$\|F\|_\oplus\leq R_F$ and $\|\lambda\|_2\leq R_\lambda$ obeys
\begin{equation}
\begin{gathered}
  X_c:=F_cF_c^\top\succeq0,\qquad
  \left\|\sum_c\mathcal A_c(X_c)-b\right\|_2\leq\epsilon_0,\\
  \left(\sum_c\|S_c(\lambda)X_c\|_F^2\right)^{1/2}
    \leq R_F\epsilon_1,\qquad
  S_c(\lambda)\succeq-\epsilon_2I_{n_c}\quad(c\in[q]).
\end{gathered}
\label{eq:product-smoothed-kkt}
\end{equation}
Blocks with $k_c>n_c$ or $a_c=0$ satisfy the final inequality
deterministically and do not contribute a failure probability.
\end{theorem}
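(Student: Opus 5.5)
The first three conclusions in \eqref{eq:product-smoothed-kkt} are deterministic consequences of the AFAC definition; all the work is in the last one. Here is how the deterministic part goes.

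\begin{itemize}
\item $X_c=F_cF_c^\top\succeq0$ holds trivially.
\item The feasibility bound is exactly \eqref{eq:product-afac-first}.
\item For complementarity, $\|S_c(\lambda)X_c\|_F\le\|S_c(\lambda)F_c\|_F\|F_c\|_2$. Since $\|F_c\|_2\le\|F\|_\oplus\le R_F$, summing squares gives the bound $R_F\epsilon_1$.
\item For blocks with $k_c>n_c$, $F_c$ has more columns than rows, so some unit $u\in\mathbb R^{k_c}$ satisfies $F_cu=0$. Take $U_c=vu^\top$ with $v$ a unit eigenvector of $S_c(\lambda)$, and $U_{c'}=0$ for $c'\neq c$. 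Then $\mathcal A_c(U_cF_c^\top)=\mathcal A_c(v(F_cu)^\top)=0\le\gamma$, so \eqref{eq:product-afac-second} gives $v^\top S_c(\lambda)v\ge-\epsilon_2$.
\item For blocks with $a_c=0$, $S_c(\lambda)=C_c$ does not depend on $\lambda$. Test directions $U=(0,\dots,vu^\top,\dots,0)$ again satisfy the constraint trivially, so the same argument gives $\lambda_{\min}(C_c)\ge-\epsilon_2$.
\end{itemize}

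For $c\in\mathcal J$, the plan is a union bound over blocks, applying the single-cone smoothed argument of \cite[Theorem~6]{cifuentes2022polynomial} block by block. Fix $c\in\mathcal J$ and define the block-local event
\[
E_c:=\Bigl\{\exists\,\lambda\in\mathbb R^m,\ \|\lambda\|_2\le R_\lambda,\ \exists\,F_c\in\mathbb R^{n_c\times k_c}:\ \|S_c(\lambda)F_c\|_F\le\epsilon_1,\ \sigma_{\min}\text{-type slack witness}\Bigr\}.
\]
More concretely, $E_c$ is the event that some bounded $\lambda$ and some $F_c$ exist for which $C_c-\mathcal A_c^*(\lambda)$ is within $\delta_c$ of a matrix whose kernel has dimension $\ge k_c$. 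This event depends only on $C_c$ and $\mathcal A_c$, and the $C_c$ are independent, so we can bound $\mathbb P(E_c)\le p_c$ separately and union bound.

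We need that outside $E_c$, every product AFAC pair has $S_c(\lambda)\succeq-\epsilon_2 I$. Suppose instead some unit $v$ has $v^\top S_c(\lambda)v<-\epsilon_2$. There are two cases.
\begin{itemize}
\item \emph{Case 1: $F_c$ is rank deficient} or has a small singular direction $u$ with $\|F_cu\|\le\gamma/a_c$. Then the block-localized direction $U=(0,\ldots,vu^\top,\ldots,0)$ has $\|\sum_{c'}\mathcal A_{c'}(U_{c'}F_{c'}^\top)\|_2=\|\mathcal A_c(v(F_cu)^\top)\|_2\le a_c\|F_cu\|\le\gamma$. Condition \eqref{eq:product-afac-second} then gives a contradiction.
\item \emph{Case 2: $\sigma_{\min}(F_c)>\gamma/a_c$.} From $\|S_c(\lambda)F_c\|_F\le\epsilon_1$, the matrix $S_c(\lambda)$ restricted to the range of $F_c$ has norm at most $\epsilon_1a_c/\gamma=\delta_c$. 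So $C_c$ lies within distance $\delta_c$ of the set $\{\mathcal A_c^*(\lambda)+W:\ \|\lambda\|\le R_\lambda,\ \operatorname{rank}\text{-}\text{nullity}(W)\ge k_c\}$.
\end{itemize}
The key point is that the block-localized test directions are legitimate AFAC test directions whether or not the other blocks are present. This is exactly the observation behind Theorem~\ref{thm:product-generic-landscape}: the other blocks never enter.

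The main obstacle is the probability estimate $\mathbb P(E_c)\le p_c$. I would redo the covering argument of \cite{cifuentes2022polynomial} for one block.
\begin{enumerate}
\item Take a $\delta_c$-net of $\{\mathcal A_c^*(\lambda):\|\lambda\|\le R_\lambda\}$. This set lies in an $r_c$-dimensional subspace with radius $\le\kappa_c$, so the net has at most $(3\kappa_c/\delta_c)^{r_c}$ points.
\item For each net point $M$, bound the probability that $C_c-M$ lies within $O(\delta_c)$ of a matrix with a $k_c$-dimensional kernel. For a uniform perturbation on a Frobenius ball of radius $\sigma_c$, this probability is at most $4\mathrm e\,(4n_c^3\delta_c/\sigma_c)^{\tau(k_c)}$.
\end{enumerate}
The condition $\delta_c\le\kappa_c$ makes the net-size bound valid. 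The conditions $\delta_c<\sigma_c/(4n_c^3)$ and $\tau(k_c)>r_c$ make the product $p_c$ meaningful, i.e.\ decaying.

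Care is needed in two places. First, $r_c$ must be counted in the fixed linearly independent representation, so the net dimension is $r_c$ rather than $m$. Second, the small-eigenvalue codimension bound must be imported verbatim from the single-cone lemma, with $n=n_c$ and $k=k_c$. Finally, a union bound over $c\in\mathcal J$ gives failure probability $\sum_{c\in\mathcal J}p_c$.
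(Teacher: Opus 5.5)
Your proposal is correct and follows the paper's proof step for step. The shared route is: the deterministic feasibility and complementarity bounds; block-localized test directions $U_c=uz^\top$ for $k_c>n_c$, for $a_c=0$, and for $\sigma_{\min}(F_c)\le\gamma/a_c$; the bound placing a bad block's cost within $\delta_c$ of $\mathcal V_{c,k_c}+\mathcal A_c^*(B_{R_\lambda})$; a net of at most $(3\kappa_c/\delta_c)^{r_c}$ points; the imported Cifuentes--Moitra tube estimate; and a union bound over $\mathcal J$.

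One step you leave implicit is passing from $\|S_c(\lambda)F_c\|_F\le\epsilon_1$ to Frobenius distance $<\delta_c$ from $\mathcal V_{c,k_c}$. The paper obtains this from the min--max characterization: the $k_c$ smallest singular values of $S_c(\lambda)$ are controlled by $\|S_c(\lambda)P\|_F$, where $P$ is an orthonormal basis of $\operatorname{range}F_c$. Simply compressing $S_c(\lambda)$ onto the orthogonal complement of that range would only give $\sqrt2\,\delta_c$.
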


The familiar near-Barvinok--Pataki scale follows blockwise.  If, for fixed
$\chi,t_c>0$,
\begin{equation}
  \tau(k_c)\geq(1+\chi)r_c+\chi t_c,
  \qquad
  \delta_c\leq
  \left(\frac1{3\kappa_c}\right)^{1/\chi}
  \left(\frac{\sigma_c}{4n_c^3}\right)^{1+1/\chi},
  \qquad
  \frac{\sigma_c}{12\kappa_cn_c^3}<1,
  \label{eq:product-smoothed-simple-conditions}
\end{equation}
then
\[
  p_c\leq
  4\mathrm e
  \left(\frac{\sigma_c}{12\kappa_cn_c^3}\right)^{t_c}.
\]
For fixed $\chi$ and $t_c$, the condition is
$k_c\sim\sqrt{2(1+\chi)r_c}$, rather than a full-width factor.  Taking
$q=1$ recovers the single-cone cost-smoothed AFAC guarantee.

\begin{remark}[Scope of the smoothed guarantee]
\label{rem:smoothed-scope}
Theorem~\ref{thm:product-smoothed-low-rank} applies when the sampled block
costs are used throughout the run.  Its event is uniform over all bounded
AFAC pairs at the prescribed rank profile, which is essential because the output
depends on the sampled costs.  It does not describe a run performed only at
the nominal costs; Section~\ref{subsec:rank-growth-finite-guarantees}
transfers the sampled-cost output back to those costs.  We use the
AFAC-to-optimality argument of \cite{cifuentes2022polynomial}, but do not
claim its polynomial iteration bound for the implemented ALM--L-BFGS--NC
procedure.  A data-dependent profile or tolerance envelope requires a finite
union bound or a separate covering argument.
\end{remark}

\section{CARDAL Algorithm and Convergence Guarantees}
\label{sec:algorithm-convergence}

We now address the algorithmic question: under what conditions does CARDAL
produce the factors characterized in Section~\ref{sec:theoretical-foundations},
and when does blockwise rank growth stop?  The outer method applies ALM directly
to the nonconvex product BM problem \eqref{eq:bm-fixed-rank}.  With the rank
profile fixed, its inner routine targets an approximate full-space SOSP of
\eqref{eq:factored-AL-subproblem}; the rank-adaptive wrapper then tests each
dual-slack block and enlarges only the blocks with detected negative
curvature.

\input{text/algorithm}

\subsection{Fixed-rank ALM convergence}
\label{subsec:fixed-rank-convergence}

Consider a tail on which the rank profile $\mathbf k$ is fixed.  For
the asymptotic analysis, suppress the finite residual stopping test.  Given
$(F_\ell,y_\ell,\rho_\ell)$, let $F_{\ell+1}$ be the accepted output of the
inner solve with base multiplier $y_\ell$, and set
\begin{equation}
  y_{\ell+1}=y_\ell-\rho_\ell c(F_{\ell+1}),
  \qquad
  \delta_\ell:=\|c(F_\ell)\|_2.
  \label{eq:fixed-rank-indexing}
\end{equation}

\begin{assumption}[Fixed-rank ALM conditions]
\label{ass:fixed-rank-alm}
Fix a rank profile $\mathbf k$ and suppose that:
\begin{enumerate}
  \item[(A1)] $\mathcal M_{\mathbf k}$ is nonempty, the SDP feasible set is
  compact, and the accepted factors and relevant inner level sets lie in a
  compact set;
  \item[(A2)] Assumption~\ref{ass:bm-geometry} holds at $\mathbf k$;
  \item[(A3)] every $F_{\ell+1}$ satisfies
  \eqref{eq:inner-sosp-contract} for $(y_\ell,\rho_\ell)$, with
  $\omega_\ell\to0$ and $\zeta_\ell\to0$;
  \item[(A4)] $\rho_0>0$, the multipliers $\{y_\ell\}$ are bounded, and the
  uncapped residual-shrinkage rule \eqref{eq:penalty-update} is used.
\end{enumerate}
\end{assumption}

\begin{lemma}[Vanishing primal residual]
\label{lem:vanishing-residual}
Under Assumption~\ref{ass:fixed-rank-alm},
\begin{equation}
  \delta_\ell\longrightarrow0.
  \label{eq:residual-vanishes}
\end{equation}
\end{lemma}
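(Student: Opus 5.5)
The plan is to split into two cases according to the behavior of the penalty sequence $\{\rho_\ell\}$ under the uncapped residual-shrinkage rule \eqref{eq:penalty-update}. I read that rule in its standard form: for some fixed $\theta\in(0,1)$ and $\beta>1$, the penalty is kept ($\rho_{\ell+1}=\rho_\ell$) when $\delta_{\ell+1}\le\theta\,\delta_\ell$ and is enlarged ($\rho_{\ell+1}\ge\beta\rho_\ell$) otherwise. Because the rule is uncapped, $\{\rho_\ell\}$ is nondecreasing and either stabilizes after finitely many increases or diverges to $+\infty$.

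\emph{Case 1: $\rho_\ell$ is increased only finitely often.} Then there is an index $\ell_0$ after which the shrinkage test is always passed, so $\delta_{\ell+1}\le\theta\,\delta_\ell$ for all $\ell\ge\ell_0$. Iterating gives $\delta_\ell\le\theta^{\ell-\ell_0}\delta_{\ell_0}\to0$. Here $\delta_{\ell_0}<\infty$, since the accepted factors lie in the compact set of (A1) and $c$ is continuous.

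\emph{Case 2: $\rho_\ell\to\infty$.} I use the multiplier update \eqref{eq:fixed-rank-indexing} directly. It gives $\rho_\ell\, c(F_{\ell+1})=y_\ell-y_{\ell+1}$, so
\[
  \delta_{\ell+1}=\|c(F_{\ell+1})\|_2\le\frac{\|y_\ell\|_2+\|y_{\ell+1}\|_2}{\rho_\ell}\le\frac{2M}{\rho_\ell},
\]
where $M:=\sup_\ell\|y_\ell\|_2<\infty$ by (A4). Hence $\delta_\ell\to0$. This argument uses only boundedness of the multipliers and $\rho_\ell\to\infty$; it does not even need the inner contract (A3) or the geometry assumption (A2). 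Those enter only in the later stationarity analysis.

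The main point needing care is the precise form of \eqref{eq:penalty-update}, which is defined in the algorithm section. If the rule enlarges $\rho$ by a factor bounded below by $\beta>1$ whenever the test fails, the dichotomy above is exhaustive and the two cases close the proof. If instead the increase is adaptive, for example $\rho_{\ell+1}=\max\{\beta\rho_\ell,\cdot\}$, or the test compares against a running minimum of past residuals, I would adapt Case 1 accordingly. In the running-minimum variant, each non-increase step forces a $\theta$-fraction decrease of the recorded minimum. Then $\liminf\delta_\ell=0$, and combining this with the monotone record upgrades it to a full limit. The indexing also needs care: the test at iteration $\ell$ must involve $\delta_{\ell+1}$, the residual of the freshly accepted $F_{\ell+1}$, so that Case 2's bound matches the shifted indexing in \eqref{eq:fixed-rank-indexing}.
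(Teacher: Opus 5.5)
Your proof is correct and follows the paper's argument: the same split between an eventually constant penalty, which gives $\delta_{\ell+1}\le\theta\,\delta_\ell$, and a diverging penalty, where the multiplier update gives $\delta_{\ell+1}\le 2\sup_j\|y_j\|_2/\rho_\ell$. You note that $\rho_\ell$ is nondecreasing, so this bound holds at every index and not only at penalty-increase indices; that slightly streamlines the paper's extra step about contractions between increases, and since \eqref{eq:penalty-update} is exactly the standard rule you assumed, your discussion of adaptive variants is unnecessary.
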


\begin{proof}
If $\rho_\ell$ is eventually constant, successful residual tests eventually
give $\delta_{\ell+1}\leq\theta\delta_\ell$.  Otherwise
$\rho_\ell\to\infty$ along the penalty-increase indices, and boundedness of
the multipliers in \eqref{eq:fixed-rank-indexing} gives
\[
  \delta_{\ell+1}
  \leq\frac{2\sup_j\|y_j\|_2}{\rho_\ell}\longrightarrow0
\]
along those indices; every intervening successful test contracts the
residual by at least $\theta$.  Hence the full sequence converges to zero.
\end{proof}

\begin{remark}[Finite penalties in the local regime]
\label{rem:finite-local-penalty}
The diverging-penalty branch above is a proof device, not a prediction.
Standard local ALM theory permits a sufficiently large finite penalty to
remain constant when an exact local minimizer branch satisfies the usual
regularity and second-order sufficient conditions
\cite[Proposition~2.7]{Bertsekas1982}.  That result explains practical
penalty stabilization but does not by itself prove local convergence of the
inexact L-BFGS--NC inner solver.
\end{remark}

\begin{theorem}[Fixed-rank product-cone ALM convergence]
\label{thm:product-fixed-rank-sosp}
Under Assumption~\ref{ass:fixed-rank-alm}, every accumulation point of the
accepted factor sequence is feasible and satisfies
\eqref{eq:product-bm-sosp}.  If, in addition, the blockwise rank conditions
\eqref{eq:product-generic-rank-condition} hold and the product cost lies
outside the exceptional set of
Theorem~\ref{thm:product-generic-landscape}, then every accumulation point is
globally optimal for \eqref{eq:product-sdp-theory},
\begin{equation}
  g(F_\ell)\to p_\star,
  \qquad
  \operatorname{dist}\!\left(
    (F_{\ell,c}F_{\ell,c}^\top)_c,\mathcal X^\star
  \right)\to0,
  \label{eq:matrix-sequence-to-solution-set}
\end{equation}
where $\mathcal X^\star$ is the product-SDP solution set.  If
$k_c\geq n_c$ for every block, the conclusion holds for every product cost.
\end{theorem}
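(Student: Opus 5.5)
The proof has three parts: pass the inner stationarity contract to an accumulation point, apply the generic landscape result there, and then upgrade from accumulation points to the whole sequence.

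\emph{Step 1: feasibility and a first-order limit.} By (A1) the accepted factors lie in a compact set, so accumulation points exist. Let $F_{\ell_j+1}\to\bar F$. Lemma~\ref{lem:vanishing-residual} gives $c(\bar F)=0$, so $\bar F\in\mathcal M_{\mathbf k}$. Since $\mathcal U_{\mathbf k}$ is open, $F_{\ell_j+1}\in\mathcal U_{\mathbf k}$ for large $j$; there $G(F)^\dagger$ and $\mathcal P_F$ vary smoothly, so $\mu_B$ is continuous near $\bar F$.

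The inner contract \eqref{eq:inner-sosp-contract} with $\omega_\ell\to0$ gives $\|(S_c(\hat y_j)F_{j,c})_c\|_\oplus\to0$, where $\hat y_j:=y_{\ell_j+1}$ is the shifted multiplier. The shifted multipliers themselves need not be bounded, so I avoid taking their limit. Instead I use the representation-invariance remark after Proposition~\ref{prop:product-sosp-transfer}. Any multiplier $\nu$ satisfies
\[
  (S_c(\nu)F_c)_c = C_F-\mathcal B_F\nu ,
\]
and $\mathcal B_F\mu_B(F)$ is the orthogonal projection of $C_F$ onto $\operatorname{range}\mathcal B_F$. Hence
\[
  \|(S_{B,c}(F)F_c)_c\|_\oplus=\min_\nu\|C_F-\mathcal B_F\nu\|_\oplus\le\|\nabla\Phi(F)\|_\oplus/2 .
\]
Passing to the limit by continuity of $\mu_B$ gives $S_{B,c}(\bar F)\bar F_c=0$ for every $c$.

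\emph{Step 2: second-order condition.} Fix $U\in\ker Dc(\bar F)$ and set $U_j:=\mathcal P_{F_j}(U)\in\ker Dc(F_j)$, so $U_j\to U$. The Hessian form \eqref{eq:AL-Hessian-form} on $U_j$ has a vanishing penalty term, hence
\[
  2\sum_c\langle U_{j,c},S_c(\hat y_j)U_{j,c}\rangle\ge-\zeta_j\|U_j\|^2 .
\]
I need to replace $\hat y_j$ by $\mu_B(F_j)$ in this tangent quadratic form. The difference $d=\hat y_j-\mu_B(F_j)$ changes the form by $-\sum_c\langle U_{j,c},\mathcal A_c^*(d)U_{j,c}\rangle$. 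This vanishes at exact stationarity, because there $\mathcal B_F d=0$, i.e.\ $d$ is orthogonal to $\operatorname{range}(Dc(F))$, and the form with a multiplier in $\ker\mathcal B_F^{\,}$ restricted to tangent vectors is zero.

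At approximate stationarity this cancellation is only approximate, and controlling it is the main obstacle. My first attempt is to split $d=d_\parallel+d_\perp$ along $\operatorname{range}(\mathcal B_F^*\!)$ and its complement. The component $d_\parallel$ is bounded by $\|\nabla\Phi\|/(2\sigma_{\min}^+(\mathcal B_F))$, and constant rank keeps $\sigma_{\min}^+$ bounded below near $\bar F$, so this part tends to zero. The component $d_\perp$ is handled by the structural identity from the proof of Proposition~\ref{prop:product-sosp-transfer}, which I would reuse.

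If this splitting fails to control the $d_\perp$ contribution, I would fall back on boundedness: $\hat y_j$ equals the next iterate $y_{\ell_j+1}$, which is bounded by (A4). I would then extract a convergent subsequence $\hat y_j\to\bar y$. This gives $S_c(\bar y)\bar F_c=0$ and tangent-PSD of $S(\bar y)$, and the invariance remark then converts the conclusion back to $\mu_B(\bar F)$. Either route yields \eqref{eq:product-bm-sosp} at $\bar F$.

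\emph{Step 3: global optimality and convergence of the whole sequence.} Under \eqref{eq:product-generic-rank-condition} and a non-exceptional cost, Theorem~\ref{thm:product-generic-landscape} makes every accumulation point $\bar F$ globally optimal, so $g(\bar F)=p_\star$ and $(\bar F_c\bar F_c^\top)_c\in\mathcal X^\star$. To get \eqref{eq:matrix-sequence-to-solution-set} for the full sequence I argue by contradiction. If $g(F_\ell)\not\to p_\star$, or the distance to $\mathcal X^\star$ stayed above some $\epsilon$ along a subsequence, compactness would yield a further accumulation point violating optimality, contradicting what was just shown. Continuity of $g$ and of $F\mapsto(F_cF_c^\top)_c$ completes the argument. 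When $k_c\ge n_c$ for every block, the deterministic branch of Theorem~\ref{thm:product-generic-landscape} removes the exceptional set, so the conclusion holds for every cost.
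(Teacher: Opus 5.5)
Your proposal is correct. Your fallback in Step~2 together with Step~3 is exactly the paper's argument. The paper uses (A4) to extract $y_{\ell_j+1}\to y_\star$ and passes the gradient bound and the tangent Hessian bound on $U_j=\mathcal P_{F_{\ell_j+1}}U_\star$ to the limit. Only then does it apply the compatible-multiplier transfer, at the feasible limit, where the feasible-curve identity from Proposition~\ref{prop:product-sosp-transfer} is already proved.

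Your primary route is genuinely different. You work at each iterate with the canonical multiplier $\mu_B(F_j)$, using its least-squares characterization for first order and the splitting $d=d_\parallel+d_\perp$ for second order. This gives a quantitative per-iterate statement without any multiplier limit: the manifold first-order residual is at most $\omega_{\ell_j}/2$, and the tangent curvature of $S_B(F_j)$ is at least $-\bigl(\zeta_{\ell_j}/2+O(\omega_{\ell_j}/\sigma_{\min}^+(\mathcal B_{F_j}))\bigr)\|U\|_\oplus^2$. No hypothesis is saved, though, since (A4) is still needed for Lemma~\ref{lem:vanishing-residual}.

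The cost of this route is the $d_\perp$ term. It requires $\sum_c\langle U_c,\mathcal A_c^*(\delta)U_c\rangle=0$ for $\delta\in\ker\mathcal B_{F_j}$ and $U\in\ker Dc(F_j)$ at \emph{infeasible} iterates, whereas the paper proves this only along feasible curves. The identity does hold, and you can close it in either of two ways:
\begin{itemize}
\item Constant rank on the open set $\mathcal U_{\mathbf k}$ makes each nearby level set $\{F:c(F)=c(F_j)\}$ a submanifold with tangent space $\ker Dc(F_j)$. Differentiating $\langle\delta,c(\Gamma(t))\rangle$ twice along a curve in that level set gives the identity.
\item For the limit statement alone: $d_\perp$ is bounded, by (A4) and continuity of $\mu_B$. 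A subsequential limit therefore lies in $\ker\mathcal B_{\bar F}$, and the identity at the feasible point $\bar F$ suffices.
\end{itemize}
You should state which of these you use rather than saying you would ``reuse'' the identity.

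Also, your remark that the shifted multipliers need not be bounded is false under (A4). The vector $\hat y_j=y_{\ell_j+1}$ is a member of the bounded sequence $\{y_\ell\}$.
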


The single-cone result is recovered when $q=1$.  Because each factor is
invariant under $F_c\mapsto F_cQ_c$, the theorem is stated in terms of
accumulation points and represented matrices; convergence of an entire factor
sequence would require additional quotient-manifold or finite-length
assumptions.

\subsection{Blockwise rank growth and finite outputs}
\label{subsec:rank-growth-finite-guarantees}

We separate three conclusions.  An idealized exact staircase explains finite
blockwise rank growth, an \mbox{a posteriori} slack test certifies any finite output
meeting the stated residual bounds, and cost smoothing gives a low-rank
finite-accuracy statement.  None of these conclusions bounds the numerical
work at a given rank.

Define the first certifying rank of block $c$ by
\begin{equation}
  k_c^{\mathrm{gen}}
  :=\min\{k\geq k_{c,0}: k\geq n_c\ \text{or}\ \tau(k)>r_c\}.
  \label{eq:product-first-generic-rank}
\end{equation}
An \emph{exact product staircase} solves each visited profile to a feasible
full-space AL SOSP, tests every slack block with zero tolerance, and enlarges
precisely the blocks with a negative slack eigenvalue.

\begin{corollary}[Finite exact blockwise rank growth]
\label{cor:product-exact-rank-growth}
Suppose Assumption~\ref{ass:bm-geometry} holds at every visited profile up to
$\mathbf k^{\mathrm{gen}}$, and exclude the finite union of the exceptional
cost sets from Theorem~\ref{thm:product-generic-landscape} over those
profiles.  Then no block in an exact product staircase is enlarged beyond
$k_c^{\mathrm{gen}}$.  If block $c$ receives
$r_{\mathrm{inc},c}$ columns per update, with the final update truncated at
$k_c^{\mathrm{gen}}$, the total number of accepted block updates is at most
\begin{equation}
  \sum_{c=1}^q
  \left\lceil
    \frac{k_c^{\mathrm{gen}}-k_{c,0}}{r_{\mathrm{inc},c}}
  \right\rceil.
  \label{eq:product-rank-update-bound}
\end{equation}
The terminal tuple satisfies the primal--dual KKT conditions and is globally
optimal.
\end{corollary}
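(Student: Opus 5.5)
The argument is by induction along the exact product staircase, maintaining the invariant $k_c\leq k_c^{\mathrm{gen}}$ for every block. The claim that drives the induction is this: once a block reaches its certifying rank, its slack can never again fail the zero-tolerance test. Fix a visited profile $\mathbf k\leq\mathbf k^{\mathrm{gen}}$ and let $F$ be the feasible full-space AL SOSP produced there. By Proposition~\ref{prop:product-sosp-transfer}, $F$ satisfies \eqref{eq:product-bm-sosp}. The compatible multiplier $\lambda=\widehat y(F;y,\rho)$ gives slacks $S_c(\lambda)$ with $S_c(\lambda)F_c=0$ and with the same tangent quadratic form as $S_{B,c}(F)$, so the staircase tests these matrices.

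\textbf{Blocks at the certifying rank.} Suppose $k_c=k_c^{\mathrm{gen}}$, so that $k_c\geq n_c$ or $\tau(k_c)>r_c$. I would argue that $S_c(\lambda)\succeq0$ by localizing the proof of Theorem~\ref{thm:product-generic-landscape} to block $c$.

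\emph{Case $k_c>\operatorname{rank}F_c$.} Take $Q\neq0$ with $F_cQ=0$ and set $U_c=vw^\top$ with $w\in\ker F_c$, and $U_{c'}=0$ for $c'\neq c$. Then $Dc(F)[U]=2\mathcal A_c(U_cF_c^\top)=0$, so $U$ is tangent. Second-order criticality then gives $v^\top S_c v\geq0$ for every $v$.

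\emph{Case $F_c$ of full column rank $k_c<n_c$.} This is where the generic argument enters. The set of $(C_c,F_c,\lambda)$ with $S_c(\lambda)F_c=0$ has dimension at most $r_c+\dim\{(S,F_c):SF_c=0\}$ modulo the $O(k_c)$ symmetry. The count $\tau(k_c)>r_c$ makes the projection onto $C_c$ measure zero. This works because the block cost enters only through block $c$ and the other blocks' data act as parameters in a product measure.

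A cleaner route is to invoke Theorem~\ref{thm:product-generic-landscape} at a modified profile. I would reprove it blockwise, using exactly the remark after the theorem that the condition is per block, and apply Fubini over the independent cost blocks. The exceptional set is then a finite union, over the finitely many profiles with $\mathbf k\leq\mathbf k^{\mathrm{gen}}$, of null sets, which is still null.

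\textbf{Counting and termination.} The claim shows that a block with $k_c=k_c^{\mathrm{gen}}$ is never enlarged. A block below $k_c^{\mathrm{gen}}$ grows by $r_{\mathrm{inc},c}$, truncated at $k_c^{\mathrm{gen}}$, so the invariant is preserved. Each block therefore receives at most $\lceil(k_c^{\mathrm{gen}}-k_{c,0})/r_{\mathrm{inc},c}\rceil$ updates, and summing over blocks gives \eqref{eq:product-rank-update-bound}. Finitely many updates mean the staircase stops at a terminal profile where every slack test passes. There $\lambda$ satisfies $S_c(\lambda)\succeq0$, $S_c(\lambda)F_cF_c^\top=0$, and $c(F)=0$. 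These are exactly the primal--dual KKT conditions of \eqref{eq:product-sdp-theory}, and weak duality, i.e.\ \eqref{eq:approx-kkt-objective-gap} with all $\varepsilon_i=0$, gives global optimality.

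\textbf{Main obstacle.} The delicate step is the claim for blocks at the certifying rank when other blocks are still below their certifying ranks. Theorem~\ref{thm:product-generic-landscape} as stated needs the rank condition in every block simultaneously, so I must extract the per-block conclusion $S_c\succeq0$ from its proof rather than its statement. I would first try to show that the theorem's argument is block-separable: the tangent perturbations used are supported on a single block, and the measure-zero sets are cylinder sets in $C_c$.
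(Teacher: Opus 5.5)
Your proposal is correct and is essentially the paper's proof. It uses single-block kernel directions $U_c=u_cz_c^\top$ to get $S_c\succeq0$ for rank-deficient blocks, and the per-block cylinder $C_c\in\mathcal V_{c,k_c}+\operatorname{Im}(\mathcal A_c^*)$, taken from the proof rather than the statement of Theorem~\ref{thm:product-generic-landscape}, to rule out full-column-rank blocks with $k_c<n_c$. The counting and terminal KKT steps are also the paper's, and where it reads curvature directly off \eqref{eq:AL-Hessian-form} rather than through Proposition~\ref{prop:product-sosp-transfer}, the two are equivalent at a feasible point.

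Two small repairs are needed. First, add the case $k_c=n_c$ with $F_c$ invertible, where $S_c(\lambda)F_c=0$ forces $S_c(\lambda)=0$. Second, your first dimension count must quotient by $GL(k_c)$, not $O(k_c)$: with $O(k_c)$ the bound stays at $\tau(n_c)+r_c$ and proves nothing. The blockwise route via $\operatorname{rank}S_c(\lambda)\le n_c-k_c$, which you fall back on, avoids this problem.
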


This exact statement concerns staircase stages; it does not assert finite time
to reach an exact SOSP.  More generally, any infinite run with a bounded
integer rank profile has a fixed-rank tail after its final accepted update, so
Theorem~\ref{thm:product-fixed-rank-sosp} applies if
Assumption~\ref{ass:fixed-rank-alm} holds there.  Reaching a rank cap alone is
not a certificate: every block at its rank cap must still pass the slack test.

\begin{proposition}[Finite-output product approximate KKT certificate]
\label{prop:finite-output-kkt}
Let $X_c=F_cF_c^\top$ and let
$y_{\mathrm{out}}=y_{\mathrm{base}}-\rho c(F)$ be the multiplier returned
after a fixed-rank ALM subproblem.  Suppose
\begin{equation}
  \|c(F)\|_2\leq\delta_{\mathrm{feas}},
  \qquad
  \|\nabla\Phi_{\rho,y_{\mathrm{base}}}(F)\|_\oplus\leq\omega,
  \qquad
  S_c(y_{\mathrm{out}})\succeq-\tau_{\mathrm{dual}}I_{n_c}\ (c\in[q]),
  \qquad
  \|F\|_\oplus\leq R_F.
  \label{eq:finite-kkt-residuals}
\end{equation}
Then $((X_c)_c,y_{\mathrm{out}})$ is
\begin{equation}
  \left(
    \delta_{\mathrm{feas}},
    \frac{R_F\omega}{2},
    \tau_{\mathrm{dual}}
  \right)
  \text{-approximately optimal}
  \label{eq:finite-kkt-tolerances}
\end{equation}
in the sense of Definition~\ref{def:approx-sdp-optimality}.
\end{proposition}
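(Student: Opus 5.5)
The plan is to verify the four conditions of Definition~\ref{def:approx-sdp-optimality} one at a time, with $\lambda:=y_{\mathrm{out}}$. Positive semidefiniteness of $X_c=F_cF_c^\top$ is automatic. Primal feasibility is equally direct: $\sum_c\mathcal A_c(X_c)-b=c(F)$, so its norm is at most $\delta_{\mathrm{feas}}$. The dual condition $S_c(\lambda)\succeq-\tau_{\mathrm{dual}}I_{n_c}$ is assumed verbatim in \eqref{eq:finite-kkt-residuals}. The only substantive item is therefore the complementarity bound $\varepsilon_1=R_F\omega/2$.

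For complementarity, I first identify the gradient with the returned slack. By \eqref{eq:shifted-multiplier}, $\widehat y(F;y_{\mathrm{base}},\rho)=y_{\mathrm{base}}-\rho c(F)=y_{\mathrm{out}}$. Then \eqref{eq:AL-gradient} gives
\[
  \nabla\Phi_{\rho,y_{\mathrm{base}}}(F)=\bigl(2S_c(y_{\mathrm{out}})F_c\bigr)_{c=1}^q,
\]
and the gradient bound becomes
\[
  \Bigl(\sum_c\|S_c(y_{\mathrm{out}})F_c\|_F^2\Bigr)^{1/2}\leq\omega/2.
\]
Next I use $S_c X_c=(S_cF_c)F_c^\top$ together with submultiplicativity $\|AB^\top\|_F\leq\|A\|_F\|B\|_2\leq\|A\|_F\|B\|_F$, which gives $\|S_cX_c\|_F\leq\|S_cF_c\|_F\,\|F_c\|_F$. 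Summing the squares and bounding each $\|F_c\|_F\leq\|F\|_\oplus\leq R_F$ yields
\[
  \sum_c\|S_cX_c\|_F^2\leq R_F^2\sum_c\|S_cF_c\|_F^2\leq R_F^2\omega^2/4.
\]
Taking square roots gives $\varepsilon_1=R_F\omega/2$.

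I expect no real obstacle here. The one point needing care is the bookkeeping above: checking that the multiplier appearing in the gradient formula is exactly $y_{\mathrm{out}}$, and not $y_{\mathrm{base}}$, because of the shift in \eqref{eq:shifted-multiplier}. One could sharpen the bound using $\max_c\|F_c\|_2$ in place of $R_F$, but the stated tolerance does not need this.
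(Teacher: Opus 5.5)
Your proposal is correct and follows the paper's proof essentially verbatim. Both use $\widehat y(F;y_{\mathrm{base}},\rho)=y_{\mathrm{out}}$ to turn the gradient bound into $\bigl(\sum_c\|S_c(y_{\mathrm{out}})F_c\|_F^2\bigr)^{1/2}\le\omega/2$, then bound $\|S_cX_c\|_F\le\|S_cF_c\|_F\,\|F_c\|_{\mathrm{op}}\le R_F\|S_cF_c\|_F$, and read the other three conditions directly off the hypotheses.
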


This deterministic certificate does not invoke a generic landscape theorem:
the blockwise slack tests directly supply approximate dual feasibility.
Together with Proposition~\ref{prop:approx-kkt-gap}, it also yields a
one-sided objective bound whenever the multiplier and a comparison solution
are bounded.

For the smoothed output statement, choose deterministic envelopes before
sampling:
\[
  \bar\delta,\ \bar\omega,\ \bar\zeta,\ \bar\rho,\
  \gamma,\ R_F,\ R_y>0
\]
and set
\begin{equation}
  \epsilon_0:=\bar\delta,
  \qquad
  \epsilon_1:=\frac{\bar\omega}{2},
  \qquad
  \epsilon_2:=\frac{\bar\zeta}{2}+2\bar\rho\gamma^2.
  \label{eq:fixed-smoothed-tolerances}
\end{equation}

\begin{corollary}[Cost-smoothed product-cone ALM output]
\label{cor:product-smoothed-output}
Suppose the independently sampled block costs of
Theorem~\ref{thm:product-smoothed-low-rank} are used throughout the run and,
at a rank profile chosen before sampling, the terminal output satisfies
\begin{equation}
  \|c(F)\|_2\leq\bar\delta,
  \qquad
  \|\nabla\Phi_{\rho,y}(F)\|_\oplus\leq\bar\omega,
  \qquad
  \lambda_{\min}^{\mathrm{Euc}}
    \!\left(\nabla^2\Phi_{\rho,y}(F)\right)\geq-\bar\zeta,
  \qquad
  \rho\leq\bar\rho,
  \label{eq:smoothed-output-residuals}
\end{equation}
and, with $\widehat y=y-\rho c(F)$,
\begin{equation}
  \|F\|_\oplus\leq R_F,
  \qquad
  \|\widehat y\|_2\leq R_y.
  \label{eq:smoothed-output-bounds}
\end{equation}
Under the conditions of
Theorem~\ref{thm:product-smoothed-low-rank}, with $R_\lambda=R_y$, the
sampled-cost output satisfies \eqref{eq:product-smoothed-kkt} with probability
at least $1-\sum_{c\in\mathcal J}p_c$.  Let
$\sigma_{\max}:=\max_c\sigma_c$.  For the nominal costs $(\bar C_c)_c$, the
same output is approximately optimal with tolerances
\begin{equation}
  \left(
    \epsilon_0,
    R_F\epsilon_1+\sigma_{\max}R_F^2,
    \epsilon_2+\sigma_{\max}
  \right).
  \label{eq:product-nominal-kkt}
\end{equation}
If $\tau_{\mathrm{dual}}\geq\epsilon_2$, no block covered by the theorem can
produce a sampled-slack direction below $-\tau_{\mathrm{dual}}$ on the same
event.
\end{corollary}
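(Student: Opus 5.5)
The proof chains Proposition~\ref{prop:alm-to-afac} with Theorem~\ref{thm:product-smoothed-low-rank}, and then moves from the sampled costs back to the nominal ones by a deterministic perturbation bound.

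\emph{Step 1: the sampled-cost output is an AFAC pair.} By \eqref{eq:smoothed-output-residuals}, $F$ is a $(\bar\omega,\bar\zeta)$-approximate product AL SOSP for the sampled costs. Proposition~\ref{prop:alm-to-afac} with $\lambda=\widehat y$ then makes $(F,\widehat y)$ a product AFAC pair with tolerances $\|c(F)\|_2\le\bar\delta$, $\bar\omega/2$, and $\bar\zeta/2+2\rho\gamma^2\le\epsilon_2$, using $\rho\le\bar\rho$. Since AFAC tolerances are monotone, this is an $(\epsilon,\gamma)$-AFAC pair with $\epsilon$ from \eqref{eq:fixed-smoothed-tolerances}. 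It also satisfies $\|F\|_\oplus\le R_F$ and $\|\widehat y\|_2\le R_y=R_\lambda$.

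\emph{Step 2: invoke the uniform event.} The envelopes, $\gamma$, and the profile are all fixed before sampling. So the event of Theorem~\ref{thm:product-smoothed-low-rank}, which has probability at least $1-\sum_{c\in\mathcal J}p_c$, is defined independently of the run. On that event the conclusion holds for \emph{every} bounded AFAC pair, and in particular for the data-dependent output. This yields \eqref{eq:product-smoothed-kkt} for the sampled costs. The uniformity is exactly what lets us apply the theorem to an output that depends on the sample, as Remark~\ref{rem:smoothed-scope} stresses.

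\emph{Step 3: transfer to the nominal costs.} Write $\bar S_c(\lambda)=\bar C_c-\mathcal A_c^*(\lambda)=S_c(\lambda)-(C_c-\bar C_c)$, and set $E_c:=C_c-\bar C_c$, so that $\|E_c\|_F\le\sigma_c\le\sigma_{\max}$.
\begin{itemize}
\item Primal feasibility does not involve costs, so the $\epsilon_0$ bound is unchanged.
\item For complementarity, the triangle inequality in the direct-sum norm gives
\[
\Bigl(\sum_c\|\bar S_cX_c\|_F^2\Bigr)^{1/2}\le R_F\epsilon_1+\Bigl(\sum_c\|E_cX_c\|_F^2\Bigr)^{1/2}.
\]
Each term satisfies $\|E_cX_c\|_F\le\|E_c\|_{\mathrm{op}}\|F_c\|_F^2\le\sigma_{\max}\|F_c\|_F^2$. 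Summing, $\bigl(\sum_c\sigma_{\max}^2\|F_c\|_F^4\bigr)^{1/2}\le\sigma_{\max}\sum_c\|F_c\|_F^2\le\sigma_{\max}R_F^2$.
\item For dual feasibility, $\lambda_{\min}(\bar S_c)\ge\lambda_{\min}(S_c)-\|E_c\|_{\mathrm{op}}\ge-\epsilon_2-\sigma_{\max}$.
\end{itemize}
Together these give \eqref{eq:product-nominal-kkt}.

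\emph{Step 4: the final claim.} On the same event, $S_c(\widehat y)\succeq-\epsilon_2I\succeq-\tau_{\mathrm{dual}}I$ for every block. This holds for blocks in $\mathcal J$ by the theorem and deterministically for the others. Hence no unit vector $v$ can satisfy $v^\top S_c v<-\tau_{\mathrm{dual}}$.

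\emph{Main obstacle.} The one delicate point is in Step 2: checking that every parameter entering the AFAC tolerances is a pre-sampling envelope rather than a run-dependent quantity. This is why $\epsilon_2$ is built from $\bar\rho$ and not from the realized $\rho$. Everything else is routine norm bookkeeping.
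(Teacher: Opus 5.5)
Your proposal is correct and follows the paper's proof essentially step for step: Proposition~\ref{prop:alm-to-afac} with the pre-sampling envelopes, the uniform event of Theorem~\ref{thm:product-smoothed-low-rank}, and the bounds $\|E_c\|_{\mathrm{op}}\le\sigma_c$ and $\|X_c\|_F\le\|F_c\|_F^2$ for the transfer to the nominal costs. Your explicit appeal to monotonicity of the AFAC tolerances, via $\rho\le\bar\rho$, spells out a step the paper leaves implicit.
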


The additive $\sigma_{\max}$ terms are the accuracy floor incurred when
transferring the sampled-cost result to the nominal problem.  For a terminal
profile chosen from a finite predeclared set, apply a union bound to
Theorem~\ref{thm:product-smoothed-low-rank} over the eligible profiles.  No
additional union over iterates is needed because the event for each prescribed
rank profile is uniform over bounded AFAC pairs.

%% file: text/algorithm.tex
\subsection{Rank-adaptive CARDAL}
\label{subsec:alm-lbfgs-nc}

We organize \name{} into three nested routines.  The rank-adaptive wrapper in
Algorithm~\ref{alg:cardal-core} is the only routine that changes the
rank profile $\mathbf k=(k_1,\ldots,k_q)$.  At each profile, it calls
the fixed-rank ALM in Algorithm~\ref{alg:fixed-rank-alm}, whose
subproblems are solved by the L-BFGS--NC method in
Algorithm~\ref{alg:lbfgs-nc}.  Because all blocks enter the shared residual
$c(F)$, they use one multiplier and one penalty and are optimized jointly in
the direct-sum factor space.  Throughout Algorithms~\ref{alg:fixed-rank-alm}
and~\ref{alg:lbfgs-nc}, $F$ denotes the tuple $(F_c)_{c=1}^q$, rather than a
single common-size matrix.  Its entries may be assembled into a single
optimization vector, but the objective, operators, gradients,
and search directions retain their blockwise definitions.  Only
Algorithm~\ref{alg:cardal-core} changes the widths $k_c$ of these blocks.
Here \emph{fixed-rank} means that every entry of $\mathbf k$ remains unchanged
during the ALM call; the block ranks need not be equal.

For fixed $(y,\rho,\mathbf k)$, the inner solver targets
\begin{equation}
  \|\nabla\Phi_{\rho,y}(F)\|_\oplus\leq\omega,
  \qquad
  \lambda_{\min}^{\mathrm{Euc}}
  \bigl(\nabla^2\Phi_{\rho,y}(F)\bigr)\geq-\zeta.
  \label{eq:inner-sosp-contract}
\end{equation}

\paragraph{Rank-adaptive wrapper.}
Algorithm~\ref{alg:cardal-core} first runs the fixed-rank ALM and then
tests every returned slack block.  For block $c$, the algorithm retains at most
$r_{\mathrm{inc},c}$ directions below $-\tau_{\mathrm{dual}}$.  If no block
violates the test, the wrapper terminates; otherwise, one joint rank-lift
problem couples all retained directions through the shared affine residual.

\begin{algorithm}[H]
\caption{\name{} rank-adaptive wrapper}
\label{alg:cardal-core}
\begin{algorithmic}[1]
\State \textbf{Input:} product factor $F=(F_c)_c$, profile $\mathbf k$,
multiplier $y_{\mathrm{base}}$, penalty $\rho>0$, slack tolerance
$\tau_{\mathrm{dual}}$, and increment caps $(r_{\mathrm{inc},c})_c$
\Loop
  \State $(F,y,\rho)\gets
    \Call{FixedRankALM}{F,y_{\mathrm{base}},\rho}$
  \For{$c=1,\ldots,q$}
    \State $\mathcal E_c\gets
      \Call{NegativeEigenpairs}
      {S_c(y),\tau_{\mathrm{dual}},r_{\mathrm{inc},c}}$
  \EndFor
  \If{$\mathcal E_c=\varnothing$ for every $c$}
    \State \Return $(F,y)$
  \Else
    \State $y_{\mathrm{base}}\gets y + \rho c(F)$
    \State $(W_c)_c\gets
      \Call{JointRankLift}{y,\rho,(\mathcal E_c,\mathcal A_c)_c}$
    \ForAll{$c$ with $W_c\neq\varnothing$}
      \State $F_c\gets[\,F_c\ \ W_c\,]$ and
      $k_c\gets k_c+\operatorname{cols}(W_c)$
    \EndFor
  \EndIf
\EndLoop
\end{algorithmic}
\end{algorithm}

The following single-cone lemma isolates the reverse-shift mechanism and its
one-dimensional quartic line search.  The product update then applies the
same identity simultaneously to the selected blocks.  To ensure that these
rank-lift constructions are well posed, we assume that the primal feasible
set is nonempty and compact.  This compactness condition also appears in
Assumption~\ref{ass:fixed-rank-alm} and rules out nonzero PSD recession
directions in the nullspace of the affine map.

\begin{lemma}[Reverse-shift rank lift and exact line search]
\label{lem:reverse-shift-rank-lift}
Let $F\in\mathbb R^{n\times k}$ be produced by a subproblem with base
multiplier $y_{\mathrm{base}}$ and penalty $\rho$, and set
\[
  y_{\mathrm{out}}=y_{\mathrm{base}}-\rho c(F).
\]
For any unit vector $v$, set
$q_v:=v^\top S(y_{\mathrm{out}})v$ and suppose $q_v<0$.  Define
\[
  \bar F=[\,F\ \ 0\,],
  \qquad
  U_v=[\,0\ \ v\,],
  \qquad
  a_v=\mathcal A(vv^\top).
\]
Then
\begin{equation}
  Dc(\bar F)[U_v]=0,
  \qquad
  \left\langle
    U_v,\nabla^2\Phi_{\rho,y_{\mathrm{base}}}(\bar F)[U_v]
  \right\rangle
  =2q_v<0.
  \label{eq:reverse-shift-rank-lift}
\end{equation}
Moreover, the enlarged-rank objective satisfies
\begin{equation}
  \Phi_{\rho,y_{\mathrm{base}}}(\bar F+tU_v)
  =
  \Phi_{\rho,y_{\mathrm{base}}}(\bar F)
  +q_vt^2+\frac{\rho}{2}\|a_v\|_2^2t^4.
  \label{eq:rank-lift-line-restriction}
\end{equation}
If $a_v\neq0$, its exact nonnegative minimizer is
\begin{equation}
  t_\star
  =
  \sqrt{\frac{-q_v}{\rho\|a_v\|_2^2}}.
  \label{eq:single-cone-rank-lift-step}
\end{equation}
\end{lemma}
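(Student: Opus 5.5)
The plan is a direct computation: expand $\Phi_{\rho,y_{\mathrm{base}}}$ exactly along the line $\bar F+tU_v$, and read off both the Hessian identity and the quartic restriction from that expansion. The only structural fact we need is that the padded factor $\bar F$ has a zero last column, while $U_v$ is supported only on that column.

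First we would record the cross terms. Since $\bar F U_v^\top = [F\ 0][0\ v]^\top = F\cdot 0 + 0\cdot v^\top = 0$, we get $Dc(\bar F)[U_v]=\mathcal A(\bar FU_v^\top+U_v\bar F^\top)=0$, which is the first claim in \eqref{eq:reverse-shift-rank-lift}. Also $\bar F\bar F^\top=FF^\top$, so $c(\bar F)=c(F)$. Consequently the shifted multiplier at $\bar F$ is $\widehat y(\bar F;y_{\mathrm{base}},\rho)=y_{\mathrm{base}}-\rho c(F)=y_{\mathrm{out}}$. This is the reverse-shift mechanism: the slack appearing in the AL Hessian at the padded point is exactly $S(y_{\mathrm{out}})$, the slack that was tested.

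Next we substitute into the Hessian form \eqref{eq:AL-Hessian-form} with $q=1$. The penalty term $\rho\|Dc(\bar F)[U_v]\|^2$ vanishes, leaving $2\langle U_v,S(y_{\mathrm{out}})U_v\rangle = 2v^\top S(y_{\mathrm{out}})v = 2q_v<0$.

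For the line restriction, note that $(\bar F+tU_v)(\bar F+tU_v)^\top = FF^\top + t^2vv^\top$, again because the cross terms vanish. Hence $c(\bar F+tU_v)=c(F)+t^2a_v$ and $g(\bar F+tU_v)=g(F)+t^2\langle C,vv^\top\rangle$. Substituting into \eqref{eq:factored-AL-subproblem} and expanding the square, the $t^2$ coefficient is
\[
\langle C,vv^\top\rangle-\langle y_{\mathrm{base}},a_v\rangle+\rho\langle c(F),a_v\rangle
=\langle C-\mathcal A^*(y_{\mathrm{base}}-\rho c(F)),vv^\top\rangle=q_v,
\]
and the $t^4$ coefficient is $\tfrac{\rho}{2}\|a_v\|^2$. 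This gives \eqref{eq:rank-lift-line-restriction}.

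Finally, for $a_v\neq0$ the restriction is $h(s)=q_vs+\tfrac{\rho}{2}\|a_v\|^2s^2$ in $s=t^2\ge0$. This is a strictly convex quadratic with minimizer $s=-q_v/(\rho\|a_v\|^2)>0$, which yields $t_\star$ in \eqref{eq:single-cone-rank-lift-step}. Checking $h'(s)=0$, or equivalently differentiating in $t$, confirms this is the unique positive stationary point and the global minimizer over $t\ge0$.

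The main obstacle is purely one of bookkeeping: making sure the linear-in-$y$ term and the cross term $\rho\langle c(F),a_v\rangle$ from the square combine into the slack at the \emph{shifted} multiplier $y_{\mathrm{out}}$ rather than at $y_{\mathrm{base}}$. The convention $\mathcal A$ acting on nonsymmetric matrices is harmless here, since every matrix that appears is symmetric.
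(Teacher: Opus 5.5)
Your proposal is correct and follows the paper's proof essentially step by step. Both arguments use the same three moves: the vanishing cross term $\bar F U_v^\top+U_v\bar F^\top=0$ gives $Dc(\bar F)[U_v]=0$; the identity $c(\bar F)=c(F)$ makes the shifted multiplier equal to $y_{\mathrm{out}}$, so \eqref{eq:AL-Hessian-form} yields $2q_v$; and the expansion $c(\bar F+tU_v)=c(F)+t^2a_v$ gives the even quartic, which your substitution $s=t^2$ simply minimizes a bit more explicitly than the paper's ``differentiate the quartic.''
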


\begin{proof}
The added column is orthogonal to the zero-padded factor in the sense that
$\bar F U_v^\top+U_v\bar F^\top=0$.  Hence
$Dc(\bar F)[U_v]=0$.  Moreover, $c(\bar F)=c(F)$, so
\[
  \widehat y(\bar F;y_{\mathrm{base}},\rho)
  =y_{\mathrm{base}}-\rho c(F)
  =y_{\mathrm{out}}.
\]
This proves \eqref{eq:reverse-shift-rank-lift} using
\eqref{eq:AL-Hessian-form}.  For any $t\in\mathbb R$,
\[
  c(\bar F+tU_v)=c(F)+t^2a_v,
  \qquad
  g(\bar F+tU_v)=g(F)+t^2v^\top Cv.
\]
Expanding the augmented Lagrangian and using
$y_{\mathrm{out}}=y_{\mathrm{base}}-\rho c(F)$ gives
\eqref{eq:rank-lift-line-restriction}.  Differentiating its even quartic
polynomial gives \eqref{eq:single-cone-rank-lift-step}.
\end{proof}

Compactness ensures $a_v\neq0$ for nonzero $v$:
otherwise $X+s vv^\top$ would be feasible for every $s\geq0$ whenever $X$ is
feasible.  Hence \eqref{eq:single-cone-rank-lift-step} is well defined and
eliminates the rank-growth scale hyperparameter.
The $y_{\mathrm{base}}$ assignment in Algorithm~\ref{alg:cardal-core}
recovers the base multiplier for which this line search is exact.

\paragraph{Joint rank lift.}
Let $\mathcal J$ be the set of violated blocks.  For each $c\in\mathcal J$,
collect the retained orthonormal directions in
$V_c=[\,v_{c,1}\ \cdots\ v_{c,\ell_c}\,]$ and define
\[
  H_c:=V_c^\top S_c(y_{\mathrm{out}})V_c,
  \qquad
  q_{c,j}:=v_{c,j}^\top S_c(y_{\mathrm{out}})v_{c,j}<0,
  \qquad
  a_{c,j}:=\mathcal A_c(v_{c,j}v_{c,j}^\top).
\]
For $Z_c\in\mathbb S_+^{\ell_c}$, set
$F_c(Z_c)=[\,F_c\ \ V_cZ_c^{1/2}\,]$ on selected blocks and leave the
remaining factors unchanged.  Lemma~\ref{lem:reverse-shift-rank-lift} gives
\begin{equation}
\begin{aligned}
  \Phi_{\rho,y_{\mathrm{base}}}(F(Z))
  ={}&\Phi_{\rho,y_{\mathrm{base}}}(F)
  +\sum_{c\in\mathcal J}\langle H_c,Z_c\rangle\\
  &+\frac{\rho}{2}
    \left\|
      \sum_{c\in\mathcal J}
        \mathcal A_c(V_cZ_cV_c^\top)
    \right\|_2^2.
\end{aligned}
  \label{eq:joint-rank-lift-restriction}
\end{equation}
Thus all selected cones and directions are coupled through the convex
quadratic SDP
\begin{equation}
  (Z_c^\star)_{c\in\mathcal J}
  \in
  \arg\min_{Z_c\succeq0,\ c\in\mathcal J}
  \left\{
    \sum_{c\in\mathcal J}\langle H_c,Z_c\rangle
    +\frac{\rho}{2}
      \left\|
        \sum_{c\in\mathcal J}
          \mathcal A_c(V_cZ_cV_c^\top)
      \right\|_2^2
  \right\}.
  \label{eq:product-joint-rank-lift-step}
\end{equation}
The PSD variables mix directions only within their own cone, while the
shared residual couples the cones.  The same recession argument makes the
objective coercive on the selected PSD subspaces.  The correction
$W_c=V_c(Z_c^\star)^{1/2}$ is compressed to the positive eigenspace of
$Z_c^\star$ before columns are allocated.

Restricting each matrix to
$Z_c=\operatorname{Diag}(z_{c,1},\ldots,z_{c,\ell_c})$ gives the joint
nonnegative QP
\begin{equation}
  \min_{z\geq0}
  \left\{
    \sum_{c\in\mathcal J}\sum_{j=1}^{\ell_c}q_{c,j}z_{c,j}
    +\frac{\rho}{2}
      \left\|
        \sum_{c\in\mathcal J}\sum_{j=1}^{\ell_c}z_{c,j}a_{c,j}
      \right\|_2^2
  \right\}.
  \label{eq:diagonal-rank-lift-qp}
\end{equation}
Restricting further to $Z_c=t^2I_{\ell_c}$ gives the shared closed-form
amplitude
\begin{equation}
  t_\star
  =
  \sqrt{
    \frac{-\sum_{c\in\mathcal J}\sum_{j=1}^{\ell_c}q_{c,j}}
    {\rho\left\|
      \sum_{c\in\mathcal J}\sum_{j=1}^{\ell_c}a_{c,j}
    \right\|_2^2}
  }.
  \label{eq:product-common-rank-lift-step}
\end{equation}
For one selected direction in one cone, this reduces to
\eqref{eq:single-cone-rank-lift-step}.

\paragraph{Fixed-rank ALM.}
With $\mathbf k$ fixed, the method alternates between an approximate subproblem
solve and the multiplier update $y^+=y-\rho c(F^+)$.  The penalty is
increased only when the primal residual fails to contract:
\begin{equation}
  \rho^+
  =
  \begin{cases}
    \rho,
      & \|c(F^+)\|_2\leq\theta\|c(F)\|_2,\\
    \gamma_\rho\rho,
      & \text{otherwise},
  \end{cases}
  \qquad
  0<\theta<1,
  \quad
  \gamma_\rho>1.
  \label{eq:penalty-update}
\end{equation}

\begin{algorithm}[H]
\caption{Fixed-rank augmented-Lagrangian method}
\label{alg:fixed-rank-alm}
\begin{algorithmic}[1]
\State \textbf{Input:} product factor $F=(F_c)_{c=1}^q$, profile $\mathbf k$,
multiplier $y$, and initial penalty $\rho>0$
\Loop
  \State $\delta\gets\|c(F)\|_2$ and choose inner tolerances
    $(\omega,\zeta)$
  \State $F\gets\Call{L-BFGS--NC}{F,y,\rho,\omega,\zeta}$
  \State $r\gets c(F)$ and $y\gets y-\rho r$
  \If{$\|r\|_2\leq\epsilon_0$}
    \State \Return $(F,y,\rho)$
  \EndIf
  \If{$\|r\|_2>\theta\delta$}
    \State $\rho\gets\gamma_\rho\rho$
  \EndIf
\EndLoop
\end{algorithmic}
\end{algorithm}

The fixed-rank routine neither computes a slack eigenpair nor changes
$\mathbf k$.  The convergence analysis assumes that the current profile can
represent a feasible point.  All rank updates are determined by the
outer dual-slack test in Algorithm~\ref{alg:cardal-core}.

\paragraph{L-BFGS--NC subproblem solver.}
For fixed $(y,\rho,\mathbf k)$, Algorithm~\ref{alg:lbfgs-nc} first reduces the
gradient using safeguarded L-BFGS steps.  Once the gradient tolerance is met,
a matrix-free spectral test is applied to the true Hessian.  If negative
curvature is detected, the method takes a curvature step and resets the
L-BFGS memory; otherwise it returns when the second-order tolerance is met.
To limit the cost of Hessian--vector products, the practical implementation
invokes the negative-curvature search only intermittently after sufficient
first-order progress, while the analysis uses the second-order termination
condition above.

\begin{algorithm}[H]
\caption{L-BFGS--NC for one augmented-Lagrangian subproblem}
\label{alg:lbfgs-nc}
\begin{algorithmic}[1]
\State \textbf{Input:} product factor $F=(F_c)_{c=1}^q$, profile $\mathbf k$,
multiplier $y$, penalty $\rho$, tolerances $(\omega,\zeta)$, and maximum step
$\tau_{\max}$
\Loop
  \While{$\|\nabla\Phi_{\rho,y}(F)\|_\oplus>\omega$}
    \State Compute a safeguarded L-BFGS direction $D=(D_c)_{c=1}^q$
    \State $\tau_\star\gets
      \arg\min_{0\leq\tau\leq\tau_{\max}}
      \Phi_{\rho,y}(F+\tau D)$
    \State $F\gets F+\tau_\star D$ and update the L-BFGS memory
  \EndWhile
  \If{$\lambda_{\min}^{\mathrm{Euc}}
      (\nabla^2\Phi_{\rho,y}(F))\geq-\zeta$}
    \State \Return $F$
  \EndIf
  \State Let $V=(V_c)_{c=1}^q$ be the returned unit
    negative-curvature direction
  \State $\tau_\star\gets
    \arg\min_{|\tau|\leq\tau_{\max}}
    \Phi_{\rho,y}(F+\tau V)$
  \State $F\gets F+\tau_\star V$ and clear the L-BFGS memory
\EndLoop
\end{algorithmic}
\end{algorithm}

Along either search direction, the augmented-Lagrangian objective is a
quartic polynomial in the step length, so the exact line search requires only
the interval endpoints and the admissible real roots of its cubic derivative.
The Hessian test in Algorithm~\ref{alg:lbfgs-nc} targets second-order
stationarity at the current profile, whereas the blockwise slack tests in
Algorithm~\ref{alg:cardal-core} determine which ranks should increase.

%% file: text/distributed_new.tex

\section{Distributed Multi-GPU Parallelization}
\label{sec:distributed-multigpu}

The difficulty of scaling a semidefinite program is not captured by its
ambient matrix dimension alone.  Large application instances exhibit at
least three distinct forms of structural complexity.  Electronic-structure
and orbital-relaxation models can contain very large families of
affine constraints, making the applications of $\mathcal A$ and
$\mathcal A^*$ the dominant cost.  Some difficult Max-Cut relaxations require
a wide working factor before a rank-adaptive method passes its terminal
dual-slack test; factor storage,
slack--factor products, and Hessian--vector products then become the
bottleneck.
Finally, sparse sum-of-squares and moment relaxations naturally produce many
PSD cones, often with highly nonuniform dimensions and sparsity patterns.
These regimes are not mutually exclusive: a single instance may have many
constraints, wide factors, and many heterogeneous cones.

This structural diversity motivates a three-axis distribution rather than a
one-dimensional data split.  We partition the affine constraints, the factor
columns, and the PSD cones over a
$\textsc{Constraint}\times\textsc{Rank}\times\textsc{Cone}$ device mesh.  The
resulting layout is an exact decomposition of the factorized SDP, not an
approximation.  Moreover, all global quantities required by the ALM--BM
method can be evaluated through two complementary collective patterns: a
forward reduction over the Rank and Cone axes, and an adjoint reduction over
the Constraint axis.

\subsection{Structural regimes and a three-axis decomposition}
\label{subsec:three-axis-decomposition}

Consider the block-diagonal form of the primal SDP,
\begin{equation}
\begin{aligned}
    \min_{\{X_c\succeq 0\}_{c=1}^{q}}\quad
    & \sum_{c=1}^{q} \langle C_c,X_c\rangle,\\
    \text{s.t.}\quad
    & \sum_{c=1}^{q}\langle A_{c,i},X_c\rangle=b_i,
      \qquad i=1,\ldots,m,
\end{aligned}
\label{eq:block-sdp-distributed}
\end{equation}
where $A_{c,i}$ is the restriction of the $i$th constraint matrix to
cone $c$.  For clarity, we omit the optional nonnegative component; it can be
represented by $1\times1$ PSD cones and distributed in the same way.  Let
\begin{equation}
    [m]=\bigsqcup_{p=1}^{P_R} I_p,
    \qquad
    [q]=\bigsqcup_{\ell=1}^{P_C}\Lambda_\ell
\label{eq:constraint-cone-partitions}
\end{equation}
be partitions of the constraints and cones.  For each cone, partition the
columns of its factor as
\begin{equation}
    [k_c]=\bigsqcup_{s=1}^{P_K}K_{c,s},
    \qquad
    F_c=\bigl[F_c^{[1]}\ \cdots\ F_c^{[P_K]}\bigr],
    \qquad
    F_c^{[s]}:=F_c[:,K_{c,s}].
\label{eq:rank-partition}
\end{equation}
Empty $K_{c,s}$ are allowed when $k_c<P_K$.  Column partitioning is
particularly convenient because it introduces no cross terms:
\begin{equation}
    X_c=F_cF_c^\top
       =\sum_{s=1}^{P_K}F_c^{[s]}F_c^{[s]\top}.
\label{eq:rank-separable-outer-product}
\end{equation}

The subscripts $R$, $K$, and $C$ denote the Constraint (row), Rank, and Cone
mesh axes, respectively; $R$ is therefore not the factor rank.

For $i\in I_p$, define the constraint--cone block operator
\begin{equation}
    \bigl[\mathcal A_{p\ell}(X_{\Lambda_\ell})\bigr]_i
    :=\sum_{c\in\Lambda_\ell}\langle A_{c,i},X_c\rangle.
\label{eq:block-operator-p-ell}
\end{equation}
Substituting \eqref{eq:rank-separable-outer-product} into
\eqref{eq:block-sdp-distributed} gives the fully partitioned BM problem
\begin{equation}
\begin{aligned}
    \min_{\{F_c^{[s]}\}_{c\in[q],\,s\in[P_K]}}\quad
    &\sum_{\ell=1}^{P_C}\sum_{s=1}^{P_K}
      \sum_{c\in\Lambda_\ell}
      \left\langle C_c,F_c^{[s]}F_c^{[s]\top}\right\rangle,\\
    \text{s.t.}\quad
    &\sum_{\ell=1}^{P_C}\sum_{s=1}^{P_K}
      \mathcal A_{p\ell}
      \left(\{F_c^{[s]}F_c^{[s]\top}\}_{c\in\Lambda_\ell}\right)
      =b_{I_p},
      \qquad p=1,\ldots,P_R.
\end{aligned}
\label{eq:three-axis-bm}
\end{equation}
Equation~\eqref{eq:three-axis-bm} exposes the three independent sums that
drive the distributed design.  The Constraint axis partitions the output
rows of $\mathcal A$; the Rank axis partitions the sum forming each
$F_cF_c^\top$; and the Cone axis partitions the block sum in each constraint,
which may couple multiple cones.

The $N_{\rm dev}=P_RP_KP_C$ devices are indexed by $(p,s,\ell)$.  Device
$(p,s,\ell)$ stores the operator shard
$\{A_{c,i}:i\in I_p,\ c\in\Lambda_\ell\}$ and the factor slabs
$\{F_c^{[s]}:c\in\Lambda_\ell\}$.  Operator shards are replicated along the
Rank axis, factors along the Constraint axis, and each objective block $C_c$
along both the Constraint and Rank axes within its cone group.  Row vectors
such as the multiplier and primal residual are replicated along the Rank and
Cone axes.  We write
$\mathsf{AR}_{R}$, $\mathsf{AR}_{K}$, and $\mathsf{AR}_{C}$ for summation by
AllReduce along the corresponding mesh axis; the result remains replicated
along the reduced axis.

\subsection{Distributed evaluation of the ALM--BM operators}
\label{subsec:distributed-operators}

The primal residual, gradient, Hessian action used by the curvature test,
and matrix--vector products used by the dual-slack test are all compositions
of a local forward product and a local adjoint--factor product.
Making these two primitives explicit both shortens the derivation and
clarifies where communication is unavoidable.

For factor-shaped families $U$ and $V$ with the same distribution as $F$,
device $(p,s,\ell)$ forms the local forward partial
\begin{equation}
    \bigl[\Pi_{p s\ell}(U,V)\bigr]_i
    :=\sum_{c\in\Lambda_\ell}
      \left\langle A_{c,i},U_c^{[s]}V_c^{[s]\top}\right\rangle,
    \qquad i\in I_p.
\label{eq:local-forward-partial}
\end{equation}
For a row vector $z_p\in\mathbb R^{|I_p|}$ and a factor-shaped family $U$,
the corresponding local adjoint--factor partial is
\begin{equation}
    \Psi_{c,ps}(z_p;U)
    :=\sum_{i\in I_p}(z_p)_i A_{c,i}U_c^{[s]},
    \qquad c\in\Lambda_\ell.
\label{eq:local-adjoint-factor}
\end{equation}
The first object is row-shaped and is reduced over the Rank and Cone axes; the
second is factor-shaped and is reduced over the Constraint axis.  In
particular,
\begin{equation}
    \mathsf{AR}_{R}\!\left(\Psi_{c,ps}(z_p;U)\right)
    =(\mathcal A^*z)_cU_c^{[s]}.
\label{eq:adjoint-factor-identity}
\end{equation}
This ordering communicates only a thin $n_c\times |K_{c,s}|$ product rather
than an assembled $n_c\times n_c$ slack matrix.

\paragraph{Primal residual.}
Using \eqref{eq:rank-separable-outer-product}, the residual slice owned by
constraint group $p$ is
\begin{equation}
    r_p
    =\mathsf{AR}_{K}\!\left(
       \mathsf{AR}_{C}\!\left(\Pi_{p s\ell}(F,F)\right)
      \right)-b_{I_p}.
\label{eq:distributed-primal-residual}
\end{equation}
There is no reduction over the Constraint axis: every component of $r_p$
belongs to exactly one constraint group.  The shifted multiplier
$\widehat y_p=y_p-\rho r_p$ is therefore formed locally.

\paragraph{Augmented-Lagrangian gradient.}
For $c\in\Lambda_\ell$, the local factor slab of the gradient is
\begin{equation}
\begin{aligned}
    \bigl[\nabla\Phi_{\rho,y}(F)\bigr]_c^{[s]}
    &=2C_cF_c^{[s]}
      -2\mathsf{AR}_{R}\!\left(
          \Psi_{c,ps}(\widehat y_p;F)
        \right)=2S_c(\widehat y)F_c^{[s]}.
\end{aligned}
\label{eq:distributed-gradient}
\end{equation}
Thus the residual and gradient realize the same forward/adjoint duality as
the distributed $Ax/A^\top y$ pattern in D-PDLP, with the additional Cone
reduction induced by block-diagonal semidefinite structure
\cite{DPDLP}.

\paragraph{Curvature test.}
Let $V$ be a spectral-search vector distributed like $F$.  The nonlocal part of the
Hessian action depends on
\begin{equation}
    \xi(V):=\mathcal A(FV^\top+VF^\top).
\label{eq:directional-constraint-image}
\end{equation}
Its constraint-group slice is obtained by the same forward reduction as the
primal residual:
\begin{equation}
    \xi_p(V)=\mathsf{AR}_{K}\!\left(
       \mathsf{AR}_{C}\!\left(
          \Pi_{p s\ell}(F,V)+\Pi_{p s\ell}(V,F)
       \right)
    \right).
\label{eq:distributed-directional-image}
\end{equation}
Define the combined local thin partial
\begin{equation}
    \Theta_{c,ps}(V)
    :=-\Psi_{c,ps}(\widehat y_p;V)
      +\rho\,\Psi_{c,ps}(\xi_p(V);F).
\label{eq:combined-hessian-partial}
\end{equation}
The two adjoint contributions can therefore be accumulated before
communication, giving the local Hessian slab with one Constraint-axis
reduction:
\begin{equation}
    \bigl[H_{\rho,y}(F)[V]\bigr]_c^{[s]}
    =2C_cV_c^{[s]}
     +2\mathsf{AR}_{R}\!\left(\Theta_{c,ps}(V)\right),
    \qquad c\in\Lambda_\ell.
\label{eq:distributed-hessian-vector}
\end{equation}
Consequently, every Hessian--vector product requires one Rank--Cone forward
reduction and one Constraint-axis thin-product reduction; no dense Hessian is
formed.

\paragraph{Dual-slack test.}
When the fixed-rank ALM returns, its final shifted multiplier $\widehat y$
is the returned multiplier $y_{\mathrm{out}}$ in
Algorithm~\ref{alg:fixed-rank-alm}.  The outer rank-adaptation test then
examines the smallest eigenvalue of each relevant slack block
$S_c(y_{\mathrm{out}})=C_c-(\mathcal A^*y_{\mathrm{out}})_c$.  Its
matrix--vector products are evaluated without assembling the slack:
\begin{equation}
    S_c(y_{\mathrm{out}})v
    =C_cv-
      \mathsf{AR}_{R}\!\left(
        \sum_{i\in I_p}(y_{\mathrm{out},p})_iA_{c,i}v
      \right).
\label{eq:distributed-slack-matvec}
\end{equation}
Each cone group tests its local cones, assigning each cone to a single Rank
coordinate $s=\pi_K(c)\in[P_K]$ to avoid redundant work.  Different cone
tests may be assigned to different Rank coordinates.  A negative Rayleigh
quotient provides a valid witness for increasing that block's rank.  The
resulting extremal-eigenvalue estimates and rank-growth flags require only
scalar gathers or reductions among the owning devices.  This formulation is
consistent with the factor-shaped communication in
\eqref{eq:distributed-gradient}; it does not assume that a full dual-slack
matrix has already been assembled.
For each retained subspace $V_c$, the operator evaluations
$\mathcal A_c(V_cZ_cV_c^\top)$ required by the joint rank-lift problem reuse
the forward-reduction pattern in
\eqref{eq:distributed-primal-residual}.  The coefficients of this small
problem are assembled by scalar reductions.  Each resulting correction is
compressed to its nonzero factor columns before those columns are assigned to
Rank shards.

\paragraph{Exactness of the distributed operator actions.}
The partitions in \eqref{eq:constraint-cone-partitions} and
\eqref{eq:rank-partition} are disjoint and exhaustive.  Linearity of
$\mathcal A$ and $\mathcal A^*$ therefore shows that
\eqref{eq:distributed-primal-residual},
\eqref{eq:distributed-gradient},
\eqref{eq:distributed-hessian-vector}, and
\eqref{eq:distributed-slack-matvec} coincide exactly with their single-device
counterparts in exact arithmetic.  In floating-point arithmetic they are
algebraically identical up to the reassociation induced by collective
reductions.  Thus distribution does not change the operator being evaluated.

The remaining inner-solver operations require only small-payload scalar
collectives.  Factor-space quantities such as gradient norms, L-BFGS
curvature-pair products, and spectral-search normalization reduce over the
Rank and Cone axes, whereas residual-space quantities reduce over the
Constraint axis.  The coefficients of the quartic line search combine both
types and are reduced over the axes on which their nonreplicated partial sums
reside.  Each reduction is performed on one replica whenever its input is
replicated, so that the same contribution is not counted $P_R$, $P_K$, or
$P_C$ times.  Their payload is small compared with the thin operator
reductions.

The product-cone decomposition in this section is algebraic: it changes
neither the objective nor the derivatives.  Consequently, its assembled
operators are exactly those used by the theory and algorithm in
Sections~\ref{sec:theoretical-foundations}
and~\ref{sec:algorithm-convergence}.

\subsection{Workload-aware partitioning}
\label{subsec:workload-aware-partitioning}

Equal numbers of constraints, factor columns, or cones rarely imply equal
work: operator nonzeros may be clustered and cone dimensions may vary
widely.  Following the nonzero-aware data-layout principle of
D-PDLP~\cite{DPDLP}, \name{} first reorders constraint rows to improve
column locality and then partitions contiguous row ranges by cumulative
operator nonzeros.  PSD cones remain indivisible.  Large cones and batches of
compatible small cones are assigned to Cone coordinates by a
longest-processing-time heuristic with weights that reflect each cone's
dimension and rank cap.  At initialization, factor columns are divided as
evenly as possible over the Rank coordinates.  These operations change only
data ordering and ownership; the distributed operators remain the exact
rearrangements established above.

The mesh dimensions $(P_R,P_K,P_C)$ are runtime configuration parameters
satisfying $P_RP_KP_C=N_{\rm dev}$.  Increasing one dimension reduces the
corresponding local constraint, factor, or cone work, but also introduces
collectives along that axis.  The appropriate shape therefore depends on the
dominant structure of the instance and on whether its local work is large
enough to amortize collective latency.

%% file: text/kernels.tex

\section{Practical Enhancements}
\label{sec:enhancements}

This section describes the numerical transformations, implementation
safeguards, and structured operator organization used by \name{}.  They do
not modify the ALM--BM control flow of
Section~\ref{sec:algorithm-convergence}, the exact distributed identities of
Section~\ref{sec:distributed-multigpu}, or the stationarity conditions in the
analysis.

\subsection{Block-structured cone operations}
\label{subsec:coneops}

Let $\Ffac=\blkdiag(F_1,\ldots,F_q)$ and write $A_{c,i}$ for the
restriction of the $i$th constraint matrix to cone $c$.  The inner solver
accesses the block SDP through three algebraic primitives:
\begin{align}
  \bigl(\Aop(\Ffac\Ffac^\top)\bigr)_i
    &=\sum_{c=1}^{q}
      \inner{A_{c,i}}{F_cF_c^\top},
  \label{eq:prim-sddmm}\\
  (\Aadj z)_cU_c
    &=\sum_i z_iA_{c,i}U_c,
  \label{eq:prim-spmv}\\
  S_c(z)U_c
    &=C_cU_c-(\Aadj z)_cU_c.
  \label{eq:prim-spmm}
\end{align}
Here $U_c$ may be a factor slab, a Hessian--vector direction, or a slack
Lanczos vector.  Thus \eqref{eq:prim-spmm} covers the gradient, curvature,
and dual-slack actions derived in Section~\ref{sec:distributed-multigpu} without
requiring a dense slack matrix.

For each cone, define the union sparsity pattern
\begin{equation}
  \mathcal S_c
  :=\operatorname{supp}(C_c)
    \cup\bigcup_{i:A_{c,i}\neq0}\operatorname{supp}(A_{c,i}).
  \label{eq:cone-union-pattern}
\end{equation}
Only entries of $F_cF_c^\top$ indexed by $\mathcal S_c$ are required in
\eqref{eq:prim-sddmm}; likewise, the adjoint and slack actions are supported
on $\mathcal S_c$.  The fixed data and union patterns are stored sparsely,
and the adjoint is contracted with $U_c$ before the distributed reduction,
as in Section~\ref{sec:distributed-multigpu}.  Consequently, each pass scales with
the active nonzeros times the relevant factor or vector width, rather than
with the dense cone sizes $n_c^2$.  Neither the primal matrix
$\Ffac\Ffac^\top$, the Hessian, nor a dense global slack is formed or
communicated.

\subsection{Batching small cones}
\label{subsec:conebatch}

Sparse SOS, moment, and chordally decomposed relaxations often contain a few
large cones together with many small cones.  Although the operators in
\eqref{eq:prim-sddmm}--\eqref{eq:prim-spmm} are already sparse, applying them
cone by cone leaves too little work in each small block to amortize fixed
operator overhead.

The direct-sum structure permits an exact aggregation.  For a batch
$\mathcal B$ of cones, define
\begin{equation}
\begin{aligned}
  A_{\mathcal B,i}
:=\blkdiag\bigl(A_{c,i}\bigr)_{c\in\mathcal B},\quad 
  S_{\mathcal B}(z)
    :=\blkdiag\bigl(S_c(z)\bigr)_{c\in\mathcal B},\quad
  F_{\mathcal B}
:=\blkdiag\bigl(F_c\bigr)_{c\in\mathcal B}.
\end{aligned}
\label{eq:batchop}
\end{equation}
Then
\begin{equation}
\begin{aligned}
  \inner{A_{\mathcal B,i}}
        {F_{\mathcal B}F_{\mathcal B}^\top}
    &=\sum_{c\in\mathcal B}\inner{A_{c,i}}{F_cF_c^\top},\\
  S_{\mathcal B}(z)F_{\mathcal B}
    &=\blkdiag\bigl(S_c(z)F_c\bigr)_{c\in\mathcal B}.
\end{aligned}
\label{eq:batch-identities}
\end{equation}
Here $\blkdiag$ denotes a direct sum; it does not imply explicit storage of
the off-diagonal zero blocks.
Batching therefore changes only the granularity at which the same block
operators are evaluated; it does not change the residual, gradient, or
spectral tests.  Cones with similar dimensions, ranks, and sparsity densities
are grouped together, while large cones remain separate.  This distinction
is particularly important when $q$ is large but most $n_c$ are
small.

\subsection{Scaling and preconditioning}
\label{subsec:precond}

The constraint rows and cone blocks can have widely different magnitudes,
which degrades both the ALM penalty update and the conditioning of the
factorized subproblems.  \name{} combines $\ell_\infty$ Ruiz equilibration
with Pock--Chambolle scaling~\cite{PockChambolle2011}, subject to the
requirement that variable scaling preserve the PSD cones.

A generic entrywise scaling of a vectorized symmetric matrix is not, in
general, a congruence and need not preserve positive semidefiniteness.  We
therefore restrict the variable scaling within each cone to a positive
diagonal congruence.  Let $r_i>0$ be a constraint-row scale, let
$D_c\succ0$ be diagonal, and let $\eta_b,\eta_C>0$ be global scaling factors
for the right-hand side and objective.  The primal variables are related by
\begin{equation}
  X_c=\eta_b^{-1}D_c^{-1}\widetilde X_cD_c^{-1},
  \qquad
  F_c=\eta_b^{-1/2}D_c^{-1}\widetilde F_c.
  \label{eq:cone-scaling-map}
\end{equation}
The corresponding scaled data are
\begin{equation}
  \widetilde A_{c,i}
    =r_i^{-1}D_c^{-1}A_{c,i}D_c^{-1},
  \qquad
  \widetilde b_i=\eta_b r_i^{-1}b_i,
  \qquad
  \widetilde C_c=\eta_C D_c^{-1}C_cD_c^{-1}.
  \label{eq:scaled-sdp-data}
\end{equation}
If $\widetilde y$ is the multiplier of the scaled problem, then
\begin{equation}
  y_i=\frac{\widetilde y_i}{r_i\eta_C},
  \qquad
  \widetilde S_c(\widetilde y)
  =\eta_CD_c^{-1}S_c(y)D_c^{-1}.
  \label{eq:scaled-dual-map}
\end{equation}
Because $D_c$ is positive diagonal, these maps preserve PSD order, rank,
primal and dual feasibility, and complementarity.  They also give
\[
  \sum_c\langle\widetilde C_c,\widetilde X_c\rangle
  =\eta_b\eta_C\sum_c\langle C_c,X_c\rangle.
\]
The Ruiz and Pock--Chambolle iterations determine $r_i$ and the diagonal
entries of $D_c$; the default implementation uses this per-coordinate
diagonal mode rather than one scalar per cone.  Scalar nonnegative variables
use the analogous one-dimensional map.

\subsection{Solver defaults}
\label{subsec:defaults}

Let $n_{\mathrm{lp}}$ denote the number of optional scalar nonnegative
components not included in $\sum_c n_c$; algebraically, they may be
treated as $1\times1$ PSD cones.
The settings in Table~\ref{tab:solver-defaults} define the experimental
protocol unless an experiment states otherwise.  We use a comprehensive KKT
stopping test following the DIMACS error framework of the Mittelmann
benchmark~\cite{Mittelmann}.  Let $X=\blkdiag(F_cF_c^\top)_c$, let
$p=\inner{C}{X}$ and $d=b^\top y$, and let
$S(y)=\blkdiag(S_c(y))_c$; optional scalar nonnegative components are included
as $1\times1$ blocks.  Let $\widehat\lambda_c(y)$ denote the smallest
slack eigenvalue returned by the block spectral routine and set
$\widehat\lambda_\oplus(y):=\min_c\widehat\lambda_c(y)$, including the
scalar blocks.  In the original minimization convention, the solver-reported
relative measures are
\begin{equation}
\begin{aligned}
  \mathrm{pres}
    :=\frac{\|\Aop(X)-b\|_2}{1+\|b\|_1}, \qquad 
  \mathrm{dres}
    :=\frac{[-\widehat\lambda_\oplus(y)]_+}{1+\|C\|_1},\qquad 
  \mathrm{gap}
    :=\frac{|p-d|}{1+|p|+|d|},
\end{aligned}
\label{eq:reported-residuals}
\end{equation}
where $[t]_+:=\max\{t,0\}$ and $\|\cdot\|_1$ is the entrywise data norm.
Thus $\mathrm{dres}$ tracks the most negative detected slack direction rather
than the Frobenius distance to the product cone.  The gap compares the primal
minimization value with the candidate dual value, which is a lower bound when
$S(y)\succeq0$.
All quantities in \eqref{eq:reported-residuals}, all objectives, and every
reported slack tolerance are evaluated after applying
\eqref{eq:cone-scaling-map}--\eqref{eq:scaled-dual-map} back to the original,
unscaled problem; internal scaled residuals are not reported.

The spectral routines are implemented matrix-free.  The inner Lanczos
search estimates the smallest eigenpair of the fixed-rank ALM Hessian, while
the outer search estimates a capped set of the most negative Ritz pairs of
each slack block for rank adaptation.  A shifted power iteration supplies the
terminal slack estimate used in the reported dual residual.  None of these
operations forms a dense Hessian or slack matrix.

\begin{table}[H]
\centering
\caption{Default numerical settings and implementation policies used by
\name{}.}
\label{tab:solver-defaults}
\begin{tabular}{p{0.31\linewidth}p{0.61\linewidth}}
\hline
Quantity & Default \\
\hline
Termination
  & Relative primal residual, dual residual, and objective gap at most
  $10^{-4}$\\
Initial BM rank
  & Raw value $\lceil 2\ln m\rceil$, subject to the cone dimension
    and the per-cone cap below \\
Per-cone rank cap
  & $\left\lceil(\sqrt{8m_c+1}-1)/2\right\rceil$, where
    $m_c=|\{i:A_{c,i}\neq0\}|$; used as a heuristic safeguard \\
Inner quasi-Newton solve
  & L-BFGS history size $5$; exact quartic line search on $[0,1]$ for
    L-BFGS steps and on $[-1,1]$ for negative-curvature steps \\
Curvature search
  & At most $15$ matrix-free Hessian--Lanczos steps \\
Rank-growth search
  & At most $30$ Lanczos steps per cone; at most the prescribed
    $r_{\mathrm{inc},c}$ most-negative directions are retained \\
Rank-lift update
  & Small joint rank-lift problem over the retained slack subspaces;
    resulting corrections are compressed before rank sharding \\
Terminal slack estimate
  & Shifted power iteration per cone, used in the reported dual residual \\
ALM penalty
  & $\rho_0=2/\sqrt{\sum_c n_c+n_{\mathrm{lp}}}$; multiplier $3.33$;
    cap $5\times10^5$ \\
Inner-iteration cap
  & $30\,000$ \\
Scaling
  & Ten Ruiz sweeps, Pock--Chambolle scaling with $\alpha=1$, per-coordinate
    positive diagonal PSD congruences, and bound/objective rescaling enabled \\
\hline
\end{tabular}
\end{table}

%% file: text/experiments.tex

\section{Numerical Experiments}
\label{sec:experiments}

The evaluation addresses two complementary questions.  We first assess the
single-GPU solver on the heterogeneous Mittelmann sparse-SDP benchmark, using
both solver-reported termination and the benchmark's uniform DIMACS accuracy
test.  We then examine how the distributed implementation responds to three
structural regimes that motivate the device mesh of
Section~\ref{sec:distributed-multigpu}: vehicle-landing moment relaxations contain many
PSD blocks and stress the Cone axis; electronic-structure SOS relaxations
contain millions of moment-matching constraints and stress the Constraint
axis; and large Max-Cut relaxations produce wide Burer--Monteiro factors and
stress the Rank axis.  These families emphasize, but do not statistically
isolate, the corresponding axes: problem size, sparsity, rank trajectory, and
ALM iteration count can change together.  The experiments therefore
characterize observed end-to-end scaling and identify when computation along
each axis amortizes the associated collectives.

\paragraph{Experimental protocol.}
All \name{} runs use FP64 arithmetic and a wall-clock limit of $3600$\,s.
Unless stated otherwise, the stopping criteria and numerical parameters are
those of Section~\ref{subsec:defaults}; MOSEK and cuLoRADS use their default
parameter configurations.  In the
Mittelmann study, baseline
results are taken from the public leaderboard~\cite{Mittelmann}, and
\name{} is run on an NVIDIA H100 GPU matching the hardware reported for
cuLoRADS~\cite{cuLoRADS}.  All \name{} experiments use one node with two
48-core Intel Xeon Platinum 8468 CPUs and four NVIDIA H100 SXM GPUs, each
with $80$\,GB of memory; the distributed studies use at most four of these
GPUs, interconnected by NVLink/NVSwitch.  In the $(P_R,P_K,P_C)$ convention of
Section~\ref{sec:distributed-multigpu}, the Cone-, Constraint-, and Rank-axis
studies use meshes $(1,1,p)$, $(p,1,1)$, and $(1,p,1)$, respectively.
For $p$ GPUs, we report the observed wall-clock speedup
\begin{equation}
  S_p:=\frac{T_1}{T_p},
  \label{eq:experimental-speedup}
\end{equation}
computed from the underlying timings before display rounding.  The symbol
$f$ means that the target specified for that table was not reached within
$3600$\,s; for a native-termination table, this means that the corresponding
solver did not report successful termination under its own criterion.  The
symbol does not distinguish a timeout from other unsuccessful termination
modes.  For runs using the default stopping rule, native success is determined
by the $10^{-4}$ tolerances on the three solver-reported quantities in
\eqref{eq:reported-residuals}.
Table-specific throughput checkpoints introduced below are reported only as
scaling measurements.
Columns for comparison solvers provide contextual reference points and are
not used to compute $S_p$.

\subsection{Mittelmann sparse-SDP benchmark}
\label{subsec:mittelmann}

We begin with the standard sparse-SDP benchmark of
Mittelmann~\cite{Mittelmann}, which contains $75$ instances spanning
combinatorial optimization, control, and relaxation problems.  The
leaderboard reports three complementary statistics.  \emph{Solved} counts
the instances on which a solver terminates according to its own stopping
rule; because these rules differ, this column is not a uniform accuracy
comparison.  \emph{DIMACS} counts the instances satisfying the benchmark's
common tolerance on the six scaled measures of primal infeasibility, dual
infeasibility, and duality gap.  \emph{SGM10} is the shifted geometric mean
of solution times with a $10$\,s shift and is the leaderboard's aggregate
efficiency metric; lower values are better.

\begin{table}[H]
\centering
\caption{Mittelmann sparse-SDP benchmark ($75$ instances; $3600$\,s limit).
\emph{Solved} uses each solver's native stopping rule, \emph{DIMACS} applies
the benchmark's uniform accuracy test, and lower values of \emph{SGM10} are
better.
\name{} is shaded.}
\label{tab:mittelmann}
\begin{tabular}{lccc}
\toprule
\textbf{Solver} & \textbf{solved/75} & \textbf{DIMACS/75}
& \textbf{SGM10}\\
\midrule
\multicolumn{4}{l}{\textit{GPU solvers}}\\
\rowcolor{gray!15}
\name{}  & 65 & 65 & 113.03\\
cuLoRADS & 68 & 56 & 96.00\\
\midrule
\multicolumn{4}{l}{\textit{CPU solvers}}\\
COPT     & 75 & 70 & 37.69\\
MOSEK    & 66 & 66 & 122.66\\
CSDP     & 65 & 60 & 158.27\\
SDPT3    & 67 & 54 & 156.08\\
SeDuMi   & 52 & 51 & 634.05\\
SDPA     & 60 & 42 & 185.85\\
\bottomrule
\end{tabular}
\end{table}

On SGM10, \name{} ranks third among the listed solvers, behind COPT and
cuLoRADS and ahead of MOSEK, SDPT3, CSDP, SDPA, and SeDuMi.  Its DIMACS
count is $65/75$, the highest among the noncommercial solvers in the table
and nine more than cuLoRADS.

The \emph{Solved} and \emph{DIMACS} columns answer different questions and
should therefore be read together.  For example, cuLoRADS reports $68$
instances solved by its internal criterion, of which $56$ satisfy the
uniform DIMACS test.  For \name{}, both counts are $65$.  This agreement is
consistent with its comprehensive native KKT test, but the table does not make
solver-specific stopping rules directly comparable.  We therefore use
DIMACS for cross-solver accuracy comparisons and SGM10 for aggregate timing.

\subsection{Cone-axis scaling on sparse moment relaxations}
\label{subsec:conewise}

We study Cone-axis parallelism using semidefinite relaxations of the
vehicle-landing optimal-control problem of~\cite{kang2024fast}, which we
abbreviate as VL.
Its sparse Lasserre relaxation organizes consecutive trajectory states and
controls into overlapping cliques.  Each stage contributes one
$190\times190$ moment block and approximately ten $19\times19$ localizing
blocks, while consensus constraints couple adjacent stages.  The resulting
family has $11N-1$ cones at horizon $N$, so increasing $N$ exposes more
indivisible blocks to the Cone-axis assignment.  We generate VL$N$ for
$N\in\{5,10,20,30,50,100,150\}$; the original formulation uses $N=50$.
Table~\ref{tab:conewise} reports the time to each solver's native successful
termination, so its baseline columns are contextual rather than
same-criterion comparisons.

\begin{table}[htbp]
\centering
\caption{Cone-axis scaling on vehicle-landing moment relaxations.  Entries are
times in seconds to each solver's native successful termination; speedups use
the one-GPU time; $f$ denotes failure within $3600$\,s; the fastest \name{}
time in each row is bold.}
\label{tab:conewise}
{\setlength{\tabcolsep}{3.4pt}
\begin{tabular}{rrrrrrrrrrr}
\toprule
& & &
& \textbf{1 GPU} & \multicolumn{2}{c}{\textbf{2 GPUs}}
& \multicolumn{2}{c}{\textbf{4 GPUs}}
& \textbf{MOSEK} & \textbf{cuLoRADS}\\
\cmidrule(lr){5-5}\cmidrule(lr){6-7}\cmidrule(lr){8-9}
\cmidrule(lr){10-10}\cmidrule(lr){11-11}
\textbf{$N$} & \textbf{cones} & \textbf{$n$} & \textbf{$m$}
& \textbf{time} & \textbf{time} & \textbf{speedup}
& \textbf{time} & \textbf{speedup} & \textbf{time} & \textbf{time}\\
\midrule
5   & 54   & 1\,881  & 89\,942     & 9.16    & \textbf{8.99}  & $1.02\times$ & 17.04           & $0.54\times$ & 214.11  & $f$\\
10  & 109  & 3\,781  & 177\,906    & 72.07   & 63.93          & $1.13\times$ & \textbf{45.87}  & $1.57\times$ & 432.87  & $f$\\
20  & 219  & 7\,581  & 353\,823    & 92.17   & 76.65          & $1.20\times$ & \textbf{69.11}  & $1.33\times$ & 800.81  & $f$\\
30  & 329  & 11\,381 & 529\,706    & 145.30  & 106.78         & $1.36\times$ & \textbf{78.76}  & $1.84\times$ & 1294.26 & $f$\\
50  & 549  & 18\,981 & 872\,903    & 211.96  & 156.55         & $1.35\times$ & \textbf{131.96} & $1.61\times$ & 2400.62 & $f$\\
100 & 1099 & 37\,981 & 1\,743\,628 & 537.82  & 342.99         & $1.57\times$ & \textbf{224.14} & $2.40\times$ & $f$     & $f$\\
150 & 1649 & 56\,981 & 2\,614\,431 & 1797.68 & 968.48         & $1.86\times$ & \textbf{686.03} & $2.62\times$ & $f$     & $f$\\
\bottomrule
\end{tabular}
}
\end{table}

Table~\ref{tab:conewise} exhibits the expected crossover.  VL5 is too small
to benefit from four GPUs, whereas four GPUs are the fastest \name{}
configuration from VL10 onward.  Relative to one GPU, the four-GPU speedup
ranges from $1.33\times$ on VL20 to $2.62\times$ on VL150; on the two largest
instances, the runtime falls from $537.82$ to $224.14$\,s and from $1797.68$
to $686.03$\,s, respectively.  Thus Cone-axis parallelism becomes more useful
as the number of assignable blocks grows, while the smallest instance remains
communication-bound.  MOSEK reports native successful termination for
instances up to VL50; neither baseline reaches its own native target on VL100
or VL150.  Because the baseline stopping criteria differ from \name{}'s
native criterion, these columns are contextual timing references rather than
same-criterion speedups.

\subsection{Constraint-axis scaling on electronic-structure SOS relaxations}
\label{subsec:rowwise}

We use the spin-free level-$2$ fermionic SOS relaxation of Low
et al.~\cite[Appendix~G]{low2025fast}.  The exact optimum of this convex
relaxation lower bounds the ground-state energy.  Each $N$-orbital instance
has three PSD cones of orders
\begin{equation}
  \bigl[2N^2,\;N,\;N\bigr],
  \qquad n=2N(N+1),
  \qquad m=\Theta(N^4),
  \label{eq:chemistry-sos-size}
\end{equation}
so the computational burden is dominated by the moment-matching constraints
rather than by the number of cones.  The benchmark suite from~\cite{low2025fast}
contains Fe$_2$S$_2$, Fe$_4$S$_4$, FeMoCo$_{54}$, FeMoCo$_{76}$, and the
P450CPD1X active space.  FeMoCo$_{76}$ is the largest chemistry instance in
that study, where cuLoRADS was used for the SDP computation.

The default solver tolerance is not an energy-accuracy statement.  Moreover,
these instances exhibit a long, high-penalty tail during which conditioning
effects in the outer ALM obscure the scaling of the distributed operators.  We
therefore measure time to the relaxed throughput checkpoint
\begin{equation}
  \mathrm{pres}<10^{-3},
  \qquad
  \mathrm{gap}<10^{-2}.
  \label{eq:chemistry-scaling-checkpoint}
\end{equation}
This checkpoint is deliberately looser than the default stopping test and is
used only to compare Constraint-axis wall-clock scaling, not to assess final
energy accuracy.  Under the same checkpoint and time limit, neither
MOSEK nor cuLoRADS reaches the target in Table~\ref{tab:rowwise}.

\begin{table}[htbp]
\centering
\caption{Constraint-axis scaling on chemistry SOS-SDPs.  Entries are times in
seconds to \eqref{eq:chemistry-scaling-checkpoint}; speedups use the one-GPU
time; $f$ denotes failure within $3600$\,s; the fastest \name{} time in each
row is bold.}
\label{tab:rowwise}
{\setlength{\tabcolsep}{4pt}
\begin{tabular}{lrrrrrrrrrr}
\toprule
& & & &
  \textbf{1 GPU} & \multicolumn{2}{c}{\textbf{2 GPUs}}
  & \multicolumn{2}{c}{\textbf{4 GPUs}}
  & \textbf{MOSEK} & \textbf{cuLoRADS}\\
\cmidrule(lr){5-5}\cmidrule(lr){6-7}\cmidrule(lr){8-9}
\cmidrule(lr){10-10}\cmidrule(lr){11-11}
\textbf{Problem} & \textbf{$N$} & \textbf{$n$} & \textbf{$m$}
& \textbf{time} & \textbf{time} & \textbf{speedup}
& \textbf{time} & \textbf{speedup} & \textbf{time} & \textbf{time}\\
\midrule
Fe$_{2}$S$_{2}$ & 20 & 840     & 160\,K  & 2.63   & 2.48   & $1.06\times$ & \textbf{2.37}  & $1.11\times$ & $f$ & $f$\\
Fe$_{4}$S$_{4}$ & 36 & 2\,664  & 1.68\,M & 11.87  & 5.62   & $2.11\times$ & \textbf{4.93}  & $2.41\times$ & $f$ & $f$\\
FeMoCo$_{54}$   & 54 & 5\,940  & 8.5\,M  & 34.72  & 17.15  & $2.02\times$ & \textbf{8.84}  & $3.93\times$ & $f$ & $f$\\
P450CPD1X       & 58 & 6\,844  & 11.3\,M & 121.45 & 74.82  & $1.62\times$ & \textbf{44.68} & $2.72\times$ & $f$ & $f$\\
FeMoCo$_{76}$   & 76 & 11\,704 & 33\,M   & 318.91 & 131.24 & $2.43\times$ & \textbf{78.36} & $4.07\times$ & $f$ & $f$\\
\bottomrule
\end{tabular}
}
\end{table}

The smallest instance shows little benefit from additional devices.  For the
four larger problems, the observed four-GPU speedup ranges from $2.41\times$
to $4.07\times$, with the largest values on the FeMoCo instances.  P450CPD1X
has a comparable largest block order but a smaller speedup; this difference
may reflect both workload balance and a different ALM or rank trajectory, so
it should not be attributed to row imbalance alone.  Overall, the larger
values of $m$ provide more local operator work with which to amortize the
Constraint-axis thin-product reduction.

\subsection{Rank-axis scaling on Max-Cut relaxations}
\label{subsec:rankwise}

Finally, we study the Goemans--Williamson Max-Cut relaxation
\cite{GoemansWilliamson1995} on graphs from the Stanford SNAP collection
\cite{SNAPdatasets}:
\begin{equation}
  \max_{X\in\Sym{N}}
  \frac14\inner{L(W)}{X}
  \quad\text{s.t.}\quad
  \operatorname{diag}(X)=\mathbf 1,
  \qquad X\succeq0,
  \label{eq:maxcut-experiment-sdp}
\end{equation}
where $L(W)$ is the graph Laplacian.  This is a single-cone SDP with
$n=m=N$; the main dimension available for distribution is the width of the
adaptive factor $F\in\mathbb R^{N\times k}$.  The Rank axis partitions the
columns of $F$, while the corresponding constraint images are combined by the
forward reduction of Section~\ref{sec:distributed-multigpu}.
To match the minimization convention used throughout the paper, the solver is
given
\begin{equation}
  C=-\frac14L(W),
  \qquad \Aop(X)=\operatorname{diag}(X),
  \qquad b=\mathbf 1.
  \label{eq:maxcut-minimization-data}
\end{equation}
Hence the reported primal value $\inner{C}{X}$ is the negative of the
relaxation's maximization value, and the residuals and primal--dual gap use
the definitions in \eqref{eq:reported-residuals} without a sign convention
change.

We construct nested real-world graph instances using $h$-cores
\cite{Seidman1983kcores}: thresholds $h\in\{8,6,5,4,3,2\}$ are applied to
the SNAP \textsf{com-DBLP} graph, and the $2$-core of
\textsf{com-Amazon} supplies a similarly sized instance from a different
domain.  Table~\ref{tab:rankwise} reports the surviving vertex and edge
counts.  For this scaling study, time is measured to the same throughput
checkpoint as in
\eqref{eq:chemistry-scaling-checkpoint},
\[
  \mathrm{pres}<10^{-3},
  \qquad
  \mathrm{gap}<10^{-2}.
\]
\begin{table}[H]
\centering
\caption{Rank-axis scaling on Max-Cut SDP relaxations of SNAP graphs.  Entries
are times in seconds to \eqref{eq:chemistry-scaling-checkpoint}; speedups use
unrounded timings; $f$ denotes failure within $3600$\,s; the fastest \name{}
time in each row is bold.}
\label{tab:rankwise}
{\setlength{\tabcolsep}{4pt}
\begin{tabular}{lrrrrrrrrr}
\toprule
& & &
  \textbf{1 GPU} & \multicolumn{2}{c}{\textbf{2 GPUs}}
  & \multicolumn{2}{c}{\textbf{4 GPUs}}
  & \textbf{MOSEK} & \textbf{cuLoRADS}\\
\cmidrule(lr){4-4}\cmidrule(lr){5-6}\cmidrule(lr){7-8}
\cmidrule(lr){9-9}\cmidrule(lr){10-10}
\textbf{Instance} & \textbf{$N$} & \textbf{$|E|$}
& \textbf{time} & \textbf{time} & \textbf{speedup}
& \textbf{time} & \textbf{speedup} & \textbf{time} & \textbf{time}\\
\midrule
DBLP 8-core   & 33\,875  & 285\,867    & 1.22 & \textbf{0.42} & $2.89\times$ & 0.84 & $1.45\times$ & $f$ & 1.32\\
DBLP 6-core   & 68\,070  & 459\,741    & 1.30 & \textbf{0.73} & $1.79\times$ & 1.12 & $1.16\times$ & $f$ & 1.97\\
DBLP 5-core   & 98\,942  & 584\,570    & 4.53 & 2.70 & $1.68\times$ & \textbf{1.64} & $2.77\times$ & $f$ & 2.76\\
DBLP 4-core   & 143\,129 & 730\,284    & 2.10 & 1.35 & $1.56\times$ & \textbf{0.69} & $3.06\times$ & $f$ & 3.73\\
DBLP 3-core   & 204\,031 & 884\,337    & 3.74 & 1.49 & $2.51\times$ & \textbf{1.01} & $3.69\times$ & $f$ & 4.69\\
DBLP 2-core   & 271\,646 & 1\,004\,432 & 7.11 & 3.80 & $1.87\times$ & \textbf{2.32} & $3.07\times$ & $f$ & 7.79\\
Amazon 2-core & 305\,892 & 896\,901    & 7.36 & 2.90 & $2.54\times$ & \textbf{2.14} & $3.44\times$ & $f$ & 16.63\\
\bottomrule
\end{tabular}
}
\end{table}

The two smallest DBLP instances are latency dominated: two GPUs are faster
than four, and the one-GPU runtimes are only $1$--$2$\,s.  From DBLP 5-core
onward, four GPUs are consistently fastest, with observed speedups between
$2.77\times$ and $3.69\times$.  These ratios should be read as wall-clock
observations rather than precise estimates of parallel efficiency, especially
for the shortest runs and for values slightly above the device count.
Collective reassociation and changes in the adaptive-rank or ALM trajectory
can also make the multi-GPU execution non-work-preserving.  The crossover is
nevertheless consistent with the expected mechanism: increasing graph and
factor work improves the amortization of the Rank-axis reduction.

Across all three families, a material benefit from additional
devices generally appears only after local operator work dominates collective
latency.  Small instances often favor one or two GPUs, whereas larger
instances with many cones, many constraint rows, or wide factors benefit from
the axis aligned with their dominant structural cost.  Thus neither the
device count nor the mesh shape is uniformly best across problem families.

%% file: text/conclusion.tex
\section{Conclusion}
\label{sec:conclusion}

We presented \name{}, a distributed low-rank solver for large-scale,
block-structured SDPs.  At fixed ranks, the method uses matrix-free
L-BFGS--NC iterations to target an approximate Euclidean second-order
stationary point of the factored augmented Lagrangian.  The outer staircase
then tests the shifted dual slack: a negative Rayleigh quotient drives
blockwise rank growth, while satisfaction of the slack tolerance supports
termination.  The reverse multiplier shift makes a negative-slack witness an
exact negative-curvature direction in a newly appended zero column.  The
resulting update admits a closed-form shared amplitude and a joint rank-lift
problem for batches of directions.

Our analysis establishes three distinct conclusions.  Under the
stated fixed-rank ALM and product-manifold assumptions, accumulation points
are second-order critical for the equality-constrained BM problem.  For
almost every product cost, a per-block rank condition then yields global SDP
optimality and finite exact blockwise rank growth.  At finite accuracy,
blockwise slack lower bounds provide an approximate KKT certificate that is
deterministic and verifiable \mbox{a posteriori}.
Under independent full-dimensional perturbations of the cost blocks, a
blockwise tube argument also gives a conditional low-rank
finite-accuracy result and transfers its sampled-cost bounds back to the
nominal costs.  None of these statements supplies a polynomial iteration
bound for the implemented ALM--L-BFGS--NC procedure.

For scalable execution, \name{} distributes constraint rows, factor columns,
and PSD blocks over a Constraint $\times$ Rank $\times$ Cone device mesh.
The distributed residual, gradient, Hessian action, and slack
matrix--vector products are exact rearrangements of their single-device
counterparts in exact arithmetic.  Workload-aware partitioning, batching of
small cones, and PSD-preserving scaling complement this decomposition.  On
the Mittelmann benchmark, \name{} passes the uniform DIMACS test on $65$ of
$75$ instances.  The three multi-GPU studies show an empirical crossover:
additional devices help once the local cone, constraint, or factor work is
large enough to amortize collective latency, whereas small instances can be
faster on fewer GPUs.  The reported chemistry and Max-Cut checkpoint timings
measure throughput rather than final solution accuracy.

\paragraph{Limitations.}  The generic product-cone result excludes a
measure-zero set of block costs, while the finite-accuracy low-rank result
requires independent smoothing of the cost blocks, prespecified rank profiles
and tolerance envelopes, and per-block rank conditions.  It therefore does
not certify an unsmoothed run on an arbitrary fixed cost.  Performance also
depends on ALM conditioning, workload balance, and the
adaptive rank and iteration trajectories, which can differ across device
meshes.  Removing the smoothing requirement and improving topology-aware
balancing and high-penalty preconditioning are natural directions for future
work.

%% file: text/appendix.tex
\section{Additional Proofs}
\label{app:additional-proofs}

\subsection{Product-cone stationarity and landscape}

\begin{proof}[Proof of Proposition~\ref{prop:product-sosp-transfer}]
Let $F\in\mathcal M_{\mathbf k}$ be a feasible full-space second-order
stationary point.  Feasibility gives $\widehat y=y$, and
\eqref{eq:AL-gradient} gives
\begin{equation}
  S_c(y)F_c=0,\qquad c\in[q].
  \label{eq:product-compatible-stationarity-proof}
\end{equation}
Let $\mu_B=\mu_B(F)$ and $\delta:=y-\mu_B$.  The compatibility relation in
\eqref{eq:product-compatible-stationarity-proof} implies
$C_F=\mathcal B_Fy$, and therefore
\[
  \mu_B
  =G(F)^\dagger\mathcal B_F^*C_F
  =G(F)^\dagger G(F)y.
\]
Because $G(F)^\dagger G(F)$ is the orthogonal projector onto
$\operatorname{range}G(F)$,
\[
  \delta\in\ker G(F)=\ker\mathcal B_F,
  \qquad
  \bigl(\mathcal A_c^*(\delta)F_c\bigr)_{c=1}^q=0.
\]
It follows immediately that $S_{B,c}(F)F_c=0$ for every block.

For $U\in T_F\mathcal M_{\mathbf k}$, Assumption~\ref{ass:bm-geometry}
provides a smooth feasible curve
$\Gamma(t)=(\Gamma_c(t))_c$ through $F$ with velocity $U$.  Twice
differentiating
$\langle\delta,c(\Gamma(t))\rangle\equiv0$ and using
$\mathcal B_F\delta=0$ yields
\[
  \sum_{c=1}^q
  \langle U_c,\mathcal A_c^*(\delta)U_c\rangle=0.
\]
Consequently,
\begin{equation}
  \sum_c\langle U_c,S_c(y)U_c\rangle
  =
  \sum_c\langle U_c,S_{B,c}(F)U_c\rangle.
  \label{eq:compatible-slack-transfer}
\end{equation}
The penalty term in \eqref{eq:AL-Hessian-form} vanishes because
$Dc(F)[U]=0$.  Full-space second-order stationarity and
\eqref{eq:compatible-slack-transfer} therefore give the curvature inequality
in \eqref{eq:product-bm-sosp}.  Together with block stationarity, this proves
the proposition.
\end{proof}

\begin{proof}[Proof of Theorem~\ref{thm:product-generic-landscape}]
For each block with $k_c<n_c$, let
\[
  \mathcal V_{c,k_c}
  :=\{H\in\mathbb S^{n_c}:
          \operatorname{rank}(H)\leq n_c-k_c\}.
\]
This determinantal variety has codimension $\tau(k_c)$ in
$\mathbb S^{n_c}$.  Since
$\dim\operatorname{Im}(\mathcal A_c^*)=r_c$,
\[
  \dim\bigl(
    \mathcal V_{c,k_c}+\operatorname{Im}(\mathcal A_c^*)
  \bigr)
  \leq \tau(n_c)-\tau(k_c)+r_c
  <\tau(n_c).
\]
It is therefore a measure-zero subset of $\mathbb S^{n_c}$.

Suppose a first-order critical factor has a full-column-rank block $F_c$
with $k_c<n_c$, and set $\lambda:=\mu_B(F)$.  Stationarity
$S_c(\lambda)F_c=0$ implies
$\operatorname{rank}S_c(\lambda)\leq n_c-k_c$, and hence
\[
  C_c
  =S_c(\lambda)+\mathcal A_c^*(\lambda)
  \in
  \mathcal V_{c,k_c}+\operatorname{Im}(\mathcal A_c^*).
\]
The union of the corresponding cylinders in
$\prod_c\mathbb S^{n_c}$ is measure zero.  Outside that union, every block
with $k_c<n_c$ is column-rank deficient.

Now let $F$ be second-order critical.  If $F_c$ is column-rank deficient,
choose $0\neq z_c\in\ker F_c$.  For arbitrary
$u_c\in\mathbb R^{n_c}$, let $U$ be zero outside block $c$ and set
$U_c=u_cz_c^\top$.  Then $Dc(F)[U]=0$, and
\eqref{eq:product-bm-sosp} gives
\[
  0\leq
  \langle U_c,S_{B,c}(F)U_c\rangle
  =\|z_c\|_2^2u_c^\top S_{B,c}(F)u_c.
\]
Thus $S_{B,c}(F)\succeq0$.  Otherwise $F_c$ has full column rank.  The
generic argument excludes this case when $k_c<n_c$, and $k_c>n_c$ cannot
have full column rank.  Hence $k_c=n_c$, $F_c$ is invertible, and
$S_{B,c}(F)F_c=0$ gives $S_{B,c}(F)=0$.  Every slack block is therefore
positive semidefinite.  Primal feasibility and complementarity complete the
product-SDP KKT conditions.
\end{proof}

\subsection{Finite-accuracy certificates and smoothing}

\begin{proof}[Proof of Proposition~\ref{prop:approx-kkt-gap}]
Let $r:=\sum_c\mathcal A_c(X_c)-b$.  Since
$C_c=\mathcal A_c^*(\lambda)+S_c(\lambda)$,
\[
  \sum_c\langle C_c,X_c\rangle
  =\langle\lambda,b+r\rangle
   +\sum_c\operatorname{tr}(S_c(\lambda)X_c).
\]
Cauchy--Schwarz across the block traces gives
\[
  \left|\sum_c\operatorname{tr}(S_c(\lambda)X_c)\right|
  \leq
  \sqrt{\sum_c n_c}
  \left(\sum_c\|S_c(\lambda)X_c\|_F^2\right)^{1/2}.
\]
Hence
\[
  \sum_c\langle C_c,X_c\rangle
  \leq
  b^\top\lambda
  +\varepsilon_0\|\lambda\|_2
  +\sqrt{\sum_c n_c}\,\varepsilon_1.
\]
For any feasible comparison tuple $(X_c^\star)_c$,
\[
  b^\top\lambda
  =\sum_c\langle C_c,X_c^\star\rangle
   -\sum_c\langle S_c(\lambda),X_c^\star\rangle
  \leq
  \sum_c\langle C_c,X_c^\star\rangle
  +\varepsilon_2\sum_c\operatorname{tr}(X_c^\star).
\]
Combining these inequalities proves
\eqref{eq:approx-kkt-objective-gap}.
\end{proof}

\begin{proof}[Proof of Proposition~\ref{prop:alm-to-afac}]
Set $\lambda=\widehat y(F;y,\rho)$.  Equation~\eqref{eq:AL-gradient} gives
\[
  \left(\sum_c\|S_c(\lambda)F_c\|_F^2\right)^{1/2}
  \leq\frac{\eta}{2}.
\]
For an admissible unit direction in
\eqref{eq:product-afac-second},
\[
  \|Dc(F)[U]\|_2
  =2\left\|\sum_c\mathcal A_c(U_cF_c^\top)\right\|_2
  \leq2\gamma.
\]
The approximate Hessian bound and \eqref{eq:AL-Hessian-form} then imply
\[
  2\sum_c\langle U_c,S_c(\lambda)U_c\rangle
  \geq-\zeta-4\rho\gamma^2.
\]
This is \eqref{eq:product-afac-second} with the tolerances in
\eqref{eq:AL-to-AFAC-tolerances}.
\end{proof}

\begin{proof}[Proof of Theorem~\ref{thm:product-smoothed-low-rank}]
The primal residual bound is part of
\eqref{eq:product-afac-first}.  Moreover,
\begin{align*}
  \left(\sum_c\|S_c(\lambda)X_c\|_F^2\right)^{1/2}
  &\leq
  \left(\sum_c
    \|S_c(\lambda)F_c\|_F^2\|F_c\|_{\mathrm{op}}^2
  \right)^{1/2}\\
  &\leq R_F\epsilon_1.
\end{align*}
It remains to prove approximate dual feasibility.

If $k_c>n_c$, choose a unit $z_c\in\ker F_c$.  For any unit
$u_c\in\mathbb R^{n_c}$, the product direction supported on block $c$ with
$U_c=u_cz_c^\top$ satisfies
$\mathcal A_c(U_cF_c^\top)=0$.  Equation
\eqref{eq:product-afac-second} then gives
$u_c^\top S_c(\lambda)u_c\geq-\epsilon_2$.  The same conclusion holds when
$a_c=0$, now using any unit $z_c\in\mathbb R^{k_c}$.  Hence these blocks
are deterministically approximately dual feasible.

Fix $c\in\mathcal J$.  If
$\sigma_{\min}(F_c)\leq\gamma/a_c$, choose a corresponding unit right
singular vector $z_c$ and again take $U_c=u_cz_c^\top$.  Then
\[
  \|\mathcal A_c(U_cF_c^\top)\|_2
  \leq a_c\|U_cF_c^\top\|_F
  =a_c\|F_cz_c\|_2
  \leq\gamma,
\]
so \eqref{eq:product-afac-second} implies
$S_c(\lambda)\succeq-\epsilon_2I$.  Any block violating approximate dual
feasibility must therefore satisfy
\begin{equation}
  \sigma_{\min}(F_c)>\frac{\gamma}{a_c}.
  \label{eq:product-bad-block-smin}
\end{equation}

Let $\mathcal V_{c,k_c}$ be the determinantal variety used in the proof of
Theorem~\ref{thm:product-generic-landscape}.  The min--max characterization
of singular values and \eqref{eq:product-bad-block-smin} give
\[
  \operatorname{dist}\bigl(S_c(\lambda),\mathcal V_{c,k_c}\bigr)
  \leq
  \frac{\|S_c(\lambda)F_c\|_F}
       {\sigma_{\min}(F_c)}
  <\frac{\epsilon_1a_c}{\gamma}
  =\delta_c.
\]
Thus a bad block can occur only if
\begin{equation}
  C_c\in
  \operatorname{tube}_{\delta_c}(\mathcal V_{c,k_c})
  +\mathcal A_c^*(B_{R_\lambda}),
  \label{eq:product-bad-cost-tube}
\end{equation}
where $B_{R_\lambda}$ is the multiplier ball.

The set $\mathcal A_c^*(B_{R_\lambda})$ lies in an $r_c$-dimensional
subspace and in the Frobenius ball of radius $\kappa_c$.  Since
$\delta_c\leq\kappa_c$, it has a $\delta_c$-net with at most
$(3\kappa_c/\delta_c)^{r_c}$ points.  For each net point,
\eqref{eq:product-bad-cost-tube} is contained in a
$2\delta_c$-tube around a translate of $\mathcal V_{c,k_c}$.
That variety has codimension $\tau(k_c)$ and is defined by minors of degree
$n_c-k_c+1$.  Applying the tube-volume estimate used in
\cite[Theorem~6]{cifuentes2022polynomial}, together with
$\delta_c<\sigma_c/(4n_c^3)$, bounds the probability of
\eqref{eq:product-bad-cost-tube} by
\[
  4\mathrm e
  \left(\frac{3\kappa_c}{\delta_c}\right)^{r_c}
  \left(\frac{4n_c^3\delta_c}{\sigma_c}\right)^{\tau(k_c)}
  =p_c.
\]
A union bound over $c\in\mathcal J$ proves
\eqref{eq:product-smoothed-kkt}.  Substituting
\eqref{eq:product-smoothed-simple-conditions} into
\eqref{eq:product-smoothed-failure} gives the simplified bound stated after
the theorem, by the same algebra as
\cite[Corollary~1]{cifuentes2022polynomial}.
\end{proof}

\subsection{CARDAL convergence and finite outputs}

\begin{proof}[Proof of Theorem~\ref{thm:product-fixed-rank-sosp}]
Lemma~\ref{lem:vanishing-residual} gives
$\|c(F_\ell)\|_2\to0$.  Let
$F_{\ell_j+1}\to F_\star$ and, after taking a further subsequence, let
$y_{\ell_j+1}\to y_\star$.  The residual and gradient bounds imply
\[
  c(F_\star)=0,
  \qquad
  S_c(y_\star)F_{\star,c}=0\quad(c\in[q]).
\]

The product tangent projector is
\[
  \mathcal P_F
  =I-\mathcal B_FG(F)^\dagger\mathcal B_F^*.
\]
Assumption~\ref{ass:bm-geometry} makes $\mathcal P_F$ smooth near
$\mathcal M_{\mathbf k}$.  Given
$U_\star\in\ker Dc(F_\star)$, set
$U_j=\mathcal P_{F_{\ell_j+1}}U_\star$.  Then
$U_j\to U_\star$ and $Dc(F_{\ell_j+1})[U_j]=0$.  Since
$y_{\ell_j+1}$ is the shifted multiplier of the subproblem indexed by
$\ell_j$, the
inner Hessian bound and \eqref{eq:AL-Hessian-form} give
\[
  2\sum_c
  \langle U_{j,c},S_c(y_{\ell_j+1})U_{j,c}\rangle
  \geq-\zeta_{\ell_j}\|U_j\|_\oplus^2.
\]
Passing to the limit and applying the compatible-multiplier argument from
the proof of Proposition~\ref{prop:product-sosp-transfer} yields
\eqref{eq:product-bm-sosp}.

Under the additional rank and generic-cost hypotheses,
Theorem~\ref{thm:product-generic-landscape} makes every accumulation point
globally optimal.  If objective convergence failed, compactness and
continuity would produce an accumulation point whose objective differs from
$p_\star$.  If convergence in distance to $\mathcal X^\star$ failed, the
same argument would produce a represented matrix tuple outside the closed
solution set.  Both contradict global optimality of every accumulation
point.
\end{proof}

\begin{proof}[Proof of Corollary~\ref{cor:product-exact-rank-growth}]
There are finitely many block profiles between
$\mathbf k_0$ and $\mathbf k^{\mathrm{gen}}$, so the union of their
measure-zero exceptional cost sets remains measure zero.

Consider a stage at which block $c$ has reached
$k_c^{\mathrm{gen}}$.  Exact feasibility and first-order stationarity give
$S_c(y)F_c=0$.  If $F_c$ has full column rank, the generic block-deficiency
argument rules out $k_c^{\mathrm{gen}}<n_c$, while
$k_c^{\mathrm{gen}}>n_c$ cannot have full column rank.  Thus
$k_c^{\mathrm{gen}}=n_c$, $F_c$ is invertible, and $S_c(y)=0$.
Otherwise choose a nonzero $z_c\in\ker F_c$.  With $U$ supported only on
block $c$ and $U_c=u_cz_c^\top$, we have $Dc(F)[U]=0$.  Full-space
second-order stationarity and \eqref{eq:AL-Hessian-form} give
\[
  0\leq
  2\|z_c\|_2^2u_c^\top S_c(y)u_c
  \qquad\text{for every }u_c,
\]
so again $S_c(y)\succeq0$.  The exact slack test cannot enlarge this block
again.

Every accepted update increases at least one block that has not reached its
certifying rank.  Summing the maximum number of updates of each block gives
\eqref{eq:product-rank-update-bound}.  At termination, every slack block is
positive semidefinite, while exact feasibility and stationarity give
complementarity.  The primal--dual KKT conditions prove global optimality.
\end{proof}

\begin{proof}[Proof of Proposition~\ref{prop:finite-output-kkt}]
Equation~\eqref{eq:AL-gradient} gives
\[
  \left(\sum_c
    \|S_c(y_{\mathrm{out}})F_c\|_F^2
  \right)^{1/2}
  \leq\frac{\omega}{2}.
\]
Consequently,
\begin{align*}
  \left(\sum_c
    \|S_c(y_{\mathrm{out}})X_c\|_F^2
  \right)^{1/2}
  &\leq
  \left(\sum_c
    \|S_c(y_{\mathrm{out}})F_c\|_F^2\|F_c\|_{\mathrm{op}}^2
  \right)^{1/2}\\
  &\leq\frac{R_F\omega}{2}.
\end{align*}
Primal positive semidefiniteness, the primal-residual bound, and the blockwise
slack bounds are assumed directly, which proves
\eqref{eq:finite-kkt-tolerances}.
\end{proof}

\begin{proof}[Proof of Corollary~\ref{cor:product-smoothed-output}]
Proposition~\ref{prop:alm-to-afac} shows that
$(F,\widehat y)$ is a product $(\epsilon,\gamma)$-AFAC pair with the fixed
tolerances in \eqref{eq:fixed-smoothed-tolerances}.
Theorem~\ref{thm:product-smoothed-low-rank} gives the sampled-cost
conclusion.

Write $C_c=\bar C_c+W_c$.  Since
$\|W_c\|_{\mathrm{op}}\leq\|W_c\|_F\leq\sigma_c$,
\[
  S_{\bar C_c}(\widehat y)
  \succeq-(\epsilon_2+\sigma_{\max})I.
\]
The direct-sum triangle inequality,
$\|X_c\|_F\leq\|F_c\|_F^2$, and
$\sum_c\|F_c\|_F^2\leq R_F^2$ also give
\[
  \left(\sum_c
    \|S_{\bar C_c}(\widehat y)X_c\|_F^2
  \right)^{1/2}
  \leq R_F\epsilon_1+\sigma_{\max}R_F^2.
\]
The affine constraints are unchanged, proving
\eqref{eq:product-nominal-kkt}.  Finally, the sampled bound
$S_{C_c}(\widehat y)\succeq-\epsilon_2I$ excludes a Rayleigh quotient below
$-\tau_{\mathrm{dual}}$ whenever
$\tau_{\mathrm{dual}}\geq\epsilon_2$.
\end{proof}